\theoremstyle{plain}
\newtheorem{prop}{Proposition}[section]
\newtheorem{thm}[prop]{Theorem}
\newtheorem{cor}[prop]{Corollary}
\newtheorem{lem}[prop]{Lemma}
\theoremstyle{definition}
\newtheorem{dfn}[prop]{Definition}
\newtheorem{rem}[prop]{Remark}
\newtheorem{rems}[prop]{Remarks}
\newtheorem{example}[prop]{Example}
\newtheorem{lab}[prop]{}
\numberwithin{equation}{section}
\newcommand{\into}{\hookrightarrow}
\renewcommand{\subset}{\subseteq}
\newcommand{\C}{{\mathbb{C}}}
\renewcommand{\P}{{\mathbb{P}}}
\newcommand{\Q}{{\mathbb{Q}}}
\newcommand{\R}{{\mathbb{R}}}
\newcommand{\Z}{{\mathbb{Z}}}
\DeclareMathOperator{\Gal}{Gal}
\DeclareMathOperator{\Hom}{Hom}
\DeclareMathOperator{\tr}{tr}
\newcommand{\x}{{\mathtt{x}}}
\renewcommand{\epsilon}{\varepsilon}
\newcommand{\ol}{\overline}
\newcommand{\ex}{\exists\,}
\renewcommand{\choose}[2]{\genfrac(){0pt}{}{#1}{#2}}
\newcommand\http[1]{\href{http://#1}{\nolinkurl{#1}}}
\newenvironment{fitbigdiagram}{\begingroup\small}{\endgroup}
\newcommand{\Label}[1]{\label{#1}}
\begin{document}

\title
[Sums of squares of polynomials with rational coefficients]
{Sums of squares of polynomials\\ with rational coefficients}

\author{Claus Scheiderer}
\address
  {Fachbereich Mathematik and Statistik \\
  Universit\"at Konstanz \\
  D--78457 Konstanz \\
  Germany}
\email{claus.scheiderer@uni.constanz.de}
\urladdr{http://www.math.uni-konstanz/\textasciitilde scheider}

\begin{abstract}
We construct families of explicit polynomials $f$ over $\Q$ that are
sums of squares of polynomials over $\R$, but not over $\Q$. Whether
or not such examples exist was an open question originally raised by
Sturmfels. We also study representations of $f$ as sums of squares of
rational functions over $\Q$. In the case of ternary quartics, we
prove that our counterexamples to Sturmfels' question are the only
ones.
\end{abstract}

\date\today
\maketitle


\section*{Introduction}

Let $f(x_1,\dots,x_n)$ be a polynomial with rational coefficients,
and assume that $f$ is a sum of squares of polynomials with real
coefficients. A few years ago, Sturmfels raised the question whether
$f$ is necessarily a sum of squares of polynomials with rational
coefficients. The main result of this paper gives a negative answer
to this question.

The background for this question comes from semidefinite programming
(see e.g.\ \cite{wsv},
\cite{bv},
\cite{n},
\cite{al})
and more specifically, from polynomial optimization. Lasserre's
method of moment relaxation \cite{la}
gives, in principle, positivity certificates for real polynomials
based on sums of squares decompositions. However, even if the initial
data is exact, e.g.\ given by polynomials with rational coefficients,
the algorithm produces floating point solutions, and therefore the
output is not necessarily reliable. One would like to understand to
what extent one can expect exact certificates, see for instance
\cite{sz},
\cite{klyz}.
The question by Sturmfels addresses this issue in its most basic
form.

From general reasons, it is clear that $f$ has a sum of squares
representation over some real number field~$K$. So far, it was known
by work of Hillar \cite{hi} that the question has a positive answer
when $K$ is totally real. Under this assumption, Hillar also gave a
bound for the number of squares needed over $\Q$, in terms of the
number needed over $K$ and of the degree of the Galois closure of $K$
over $\Q$. Quarez \cite{qu} later gave a different proof to the same
result and improved Hillar's bound significantly. Both proofs are
constructive. In Section \ref{sect:totreal} we revisit the result
and show that it is essentially an immediate consequence of
well-known properties of the trace form of $K/\Q$. Our argument is
constructive as well. In addition it gives various new information,
for instance that the bound found by Quarez holds in the non-Galois
as well.

In Section \ref{sect:stq} we present explicit counterexamples to the
question by Sturmfels. Working with homogeneous polynomials (forms)
we construct, for any integer $n\ge2$ and any even number $d\ge4$, a
family of forms $f\in\Q[x_0,\dots,x_n]$ of degree~$d$ that are sums
of two squares of forms over $\R$, but not sums of squares of forms
over~$\Q$ (Theorem \ref{stq}). These forms $f$ are the $K/\Q$-norms
of linear forms defined over suitable number fields $K$ of
degree~$d$.
As a by-product, we show for any real number
field $k$ that there is no analogue of Hilbert's theorem on
nonnegative ternary quartics (the qualitative part): There always
exists a nonnegative ternary quartic form with coefficients in $\Q$
that is not a sum of squares of forms over~$k$ (Corollary
\ref{stqgend}).

Any nonnegative form $f\in\Q[x_0,\dots,x_n]$ is a sum of squares of
rational functions over $\Q$, according to Artin. In Section
\ref{sect:denoms} we study such representations for the family of
counterexamples constructed in Section \ref{sect:stq}. If $f$ is such
a form with $\deg(f)=d$, we prove (Theorem \ref{denomh17}) that
there always exists a nonzero form $h$ over $\Q$ of degree $d-2$, but
not of any smaller degree, for which $fh$ is a sum of squares over
$\Q$. In fact, we explicitly construct all such forms $h$
(Proposition \ref{beautiful}). For $d=4$, this yields in particular
an explicit representation of $f$ as a sum of squares of rational
functions \`a~la Artin.

In Section \ref{sect:tq} we prove a partial converse to the
construction from Section \ref{sect:stq}. In the case $(n,d)=(2,4)$
of ternary quartics, we show that every counterexample to Sturmfels'
question arises from our construction (Theorem \ref{terquaronly}).
The proof makes use of a canonical linear subspace $U_f\subset\R[x_0,
\dots,x_n]$ associated with any sum of squares~$f\in\R[x_0,\dots,
x_n]$. We call $U_f$ the characteristic subspace associated with~$f$.
This notion is useful in other situations as well.
At the end of the paper we collect a few open questions.

I would like to thank Marie-Fran\c coise Roy and Ronan Quarez for
stimulating discussions. In particular, the results of Section
\ref{sect:denoms} were prompted by a question of Roy.


\section{Descending sums of squares representations\\ in totally real
  extensions}\Label{sect:totreal}%

Let $f\in\Q[x_1,\dots,x_n]=\Q[\x]$ be a polynomial, and assume that
$f$ is a sum of squares of polynomials in $K[\x]$ where $K$ is a real
number field. In this section we review the result of Hillar
\cite{hi} according to which $f$ is a sum of squares in $\Q[\x]$. We
will show that it is a simple consequence of properties of the trace
form of $K/\Q$. As a consequence, we will generalize the bound of
Quarez \cite{qu} to the case where $K/\Q$ is not necessarily Galois.

\begin{lab}\Label{tracerapp}%
Before giving the actual proof, which is very short, we need to
recall a few facts about trace quadratic forms. Let $K/k$ be a finite
separable field extension of degree $d:=[K:k]$, and consider the
quadratic form
$$\tau\colon K\to k,\quad y\mapsto\tr_{K/k}(y^2)$$
over $k$, where $\tr_{K/k}$ denotes the trace of $K$ over $k$. The
trace form $\tau$ has the following well-known property: For any
ordering $P$ of $k$, the Sylvester signature of $\tau$ with respect
to $P$ is equal to the number of extensions of the ordering $P$
to~$K$. See \cite{s}, Lemma 3.2.7 or Theorem 3.4.5.

Assume that $k$ is real
and that every ordering of $k$ has $d=[K:k]$ different extensions
to $K$, or equivalently, that every ordering of $k$ extends to the
Galois hull of $K$ over~$k$.
Then $\tau$ is positive definite with respect to every ordering of
$k$. Diagonalizing $\tau$ therefore gives sums of squares $a_1,\dots,
a_d$ in $k^*$, together with a $k$-linear basis $y_1,\dots,y_d$ of
$K$, such that
\begin{equation}\Label{trkp}%
\tr_{K/k}\Bigl(\Bigl(\sum_{i=1}^dx_iy_i\Bigr)^2\Bigr)\>=\>\sum_
{i=1}^da_ix_i^2
\end{equation}
holds for all $x_1,\dots,x_d\in k$. Note that we can choose $a_1=d$
here by starting the diagonalization with $y_1=1$.
More generally, if $A$ is an arbitrary (commutative) $k$-algebra and
$A_K=A\otimes_kK$, then
\begin{equation}\Label{tralg}%
\tr_{A_K/A}\Bigl(\Bigl(\sum_{i=1}^dx_i\otimes y_i\Bigr)^2\Bigr)\>=
\>\sum_{i=1}^da_ix_i^2
\end{equation}
holds for all $x_1,\dots,x_n\in A$.
\end{lab}

The following theorem is now a simple observation. It sharpens the
results of Hillar \cite{hi} and Quarez \cite{qu}:

\begin{thm}\Label{improvbound}%
Let $K/k$ be an extension of real fields of degree $d=[K:k]<\infty$,
and assume that every ordering of $k$ extends to $d$ different
orderings of $K$. Then there exist sums of squares $c_1,\dots,c_d$ in
$k$ with $c_1=1$ and with the following property:

For every $k$-algebra $A$, every $m\ge1$ and every $f\in A$ which is
a sum of $m$ squares in $A_K=A\otimes_kK$, there exist $f_1,\dots,f_d
\in A$ such that each $f_i$ is a sum of $m$ squares in $A$, and such
that
$$f\>=\>\sum_{i=1}^dc_if_i.$$
In particular, $f$ is a sum of $dm\cdot p(k)$ squares in $A$. (This
number can be improved, see Remarks \ref{boundsrem} below.)
\end{thm}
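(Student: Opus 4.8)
The plan is to deduce everything from the trace-form identity \eqref{tralg} recalled in \ref{tracerapp}. Applying that discussion with $k$ in place of $k$ and $K$ in place of $K$ — the hypothesis that every ordering of $k$ extends to $d$ orderings of $K$ is exactly the condition under which $\tau$ is positive definite at every ordering — I obtain a $k$-basis $y_1=1,y_2,\dots,y_d$ of $K$ and sums of squares $a_1=d,a_2,\dots,a_d\in k^*$ such that \eqref{tralg} holds for every commutative $k$-algebra $A$. The constants $c_i$ in the statement will essentially be the $a_i$, rescaled: set $c_1:=1$ and $c_i:=a_i/d$ for $i=2,\dots,d$; these are still sums of squares in $k$ since $d$ is a sum of $d$ ones and hence $1/d=d/d^2$ is a sum of squares, and a product of sums of squares is a sum of squares.

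The core of the argument is the following: suppose $f\in A$ is a sum of $m$ squares in $A_K$, say $f=\sum_{j=1}^m g_j^2$ with $g_j\in A_K$. Write each $g_j$ in the basis as $g_j=\sum_{i=1}^d x_{ij}\otimes y_i$ with $x_{ij}\in A$. Now apply the $A$-linear trace map $\tr_{A_K/A}$ to the identity $f=\sum_j g_j^2$. On the left, since $f\in A$ and $[A_K:A]=d$ (freely, via the basis $y_i$), we get $\tr_{A_K/A}(f)=df$. On the right, $\tr_{A_K/A}(\sum_j g_j^2)=\sum_j\tr_{A_K/A}(g_j^2)=\sum_j\sum_{i=1}^d a_i x_{ij}^2$ by \eqref{tralg} applied to each $g_j$. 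Hence
$$df\>=\>\sum_{i=1}^d a_i\Bigl(\sum_{j=1}^m x_{ij}^2\Bigr),$$
so dividing by $d$ and setting $f_i:=\sum_{j=1}^m x_{ij}^2\in A$ — manifestly a sum of $m$ squares in $A$ — gives $f=\sum_{i=1}^d c_i f_i$ with $c_i=a_i/d$ as above; note $c_1=a_1/d=1$ as required. This is the whole representation; the final count is then bookkeeping: each $c_i$ is a sum of, say, at most $p(k)$ squares in $k$ (or $1$ square when $c_i=1$, but the crude bound $p(k)$ suffices), each $f_i$ is a sum of $m$ squares, a product of a sum of $p(k)$ squares and a sum of $m$ squares is a sum of $m\,p(k)$ squares, and summing over $i=1,\dots,d$ yields $f$ as a sum of $dm\cdot p(k)$ squares in $A$.

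I expect no serious obstacle: the only points requiring a little care are (i) checking that $\tr_{A_K/A}$ restricted to $A$ is multiplication by $d$, which follows because $A_K$ is free of rank $d$ over $A$ on the basis $\{y_i\}$ and $y_1=1$ lies in this basis, so the matrix of multiplication by $a\in A$ is $a\cdot I_d$; (ii) verifying that $1/d$, and hence each $c_i=a_i/d$, is a sum of squares in $k$ — immediate from $1/d = (1/d)^2\cdot d = \sum_{\nu=1}^d (1/d)^2$; and (iii) being slightly careful that \eqref{tralg} in \ref{tracerapp} is stated for a single element $\sum_i x_i\otimes y_i$, so one applies it once per summand $g_j$ and then sums, which is legitimate since trace is additive. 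The genericity of $A$, $m$, and $f$ in the statement costs nothing, since the construction of the $y_i$ and $a_i$ depends only on $K/k$. Strengthenings of the bound (replacing the factor $p(k)$, handling the non-Galois refinement of Quarez's estimate) are deferred to the remarks referenced in the statement and need not be addressed here.
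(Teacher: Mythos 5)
Your proposal is correct and follows essentially the same route as the paper: diagonalize the trace form with $y_1=1$ so that $a_1=d$, set $c_i=a_i/d$, apply $\tr_{A_K/A}$ to $f=\sum_j g_j^2$ using \eqref{tralg}, and take $f_i=\sum_j x_{ij}^2$. Your extra checks (that $1/d$ is a sum of squares, that the trace restricted to $A$ is multiplication by $d$, and the additivity used to apply \eqref{tralg} term by term) are exactly the implicit points in the paper's argument, so there is nothing to add.
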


Here $p(k)$ denotes the Pythagoras number of $k$, i.\,e., the
smallest number $p$ such that every sum of squares in $k$ is a sum of
$p$ squares in $k$. (If no such number $p$ exists one puts $p(k)=
\infty$.)

\begin{proof}
Choose sums of squares $a_i$ in $k$ and elements $y_i\in K$ ($i=
1,\dots,d$) as in \ref{tracerapp}. It suffices to take $c_i=
\frac{a_i}d$ for $i=1,\dots,d$. Indeed, assuming $f=g_1^2+\cdots+
g_m^2$ with $g_1,\dots,g_m\in A_K$, we get
$$d\cdot f=\tr_{A_K/A}(f)=\sum_{j=1}^m\tr_{A_K/A}(g_j^2)=\sum_{j=1}^m
\sum_{i=1}^da_ix_{ij}^2,$$
where the $x_{ij}\in A$ are determined by $g_j=\sum_{i=1}^dx_{ij}
\otimes y_i$ ($j=1,\dots,m$). So the assertion in the theorem holds
with $f_i=\sum_{j=1}^mx_{ij}^2$ ($i=1,\dots,d$).
\end{proof}

\begin{rems}\Label{boundsrem}%
\smallskip

1.\
The proof is completely constructive: Knowing the sums of squares
decomposition of $f$ in $A_K$, we explicitly get $f_1,\dots,f_d$
together with sums of squares decompositions in~$A$.
\smallskip

2.\
Assume that $k$ is a number field, so $p(k)=4$. Using the well-known
composition formulas for sums of four squares, we can improve the
upper bound $4dm$ in Theorem \ref{improvbound}. Indeed, $c_if_i$ is a
sum of $4\lceil\frac m4\rceil$ squares for every $i$, and is a sum of
$m$ squares for $i=1$, so altogether $f$ is a sum of
$$m+4(d-1)\cdot\left\lceil\frac m4\right\rceil$$
squares in $A$. This is precisely the bound found by Quarez \cite{qu}
in the case where $k=\Q$ and $K/\Q$ is Galois. Note that this bound
lies between $dm$ and $d(m+3)-3$.%
\smallskip

3.\
Similar as in the previous remark, we can improve the bound in
Theorem \ref{improvbound} for arbitrary $K/k$, using composition. In
this way we obtain the general bound
\begin{equation}\Label{genbd}%
8d\cdot\left\lceil\frac{p(k)}8\right\rceil\cdot\left\lceil\frac m8
\right\rceil
\end{equation}
for the number of squares in $A$, which is roughly $\frac18$ of the
bound mentioned in \ref{improvbound}. If $\min\{p(k),\,m\}$ is at
most~$4$ (resp.~$2$), we get a better valid bound by replacing the
number~$8$ in \eqref{genbd} by~$4$ (resp.~$2$). By making use of the
fact that $c_1=1$, all these bounds can still be improved a little
more, similar as in the previous remark.
\end{rems}

\begin{lab}
The qualitative part of the above result extends immediately to the
following more general situation. For any commutative ring $B$, let
$\Sigma B^2$ denote the set of sums of squares in $B$. Let $K/k$ be a
field extension and let $A$ be a (commutative) $k$-algebra. Fix
elements $h_1,\dots,h_r\in A$, amd consider the so-called (pseudo)
quadratic module
$$M\>:=\>\Bigl\{\sum_{i=1}^rs_ih_i\colon s_1,\dots,s_r\in\Sigma A^2
\Bigr\}$$
generated in $A$ by the $h_i$. Similarly, let
$$M_K\>:=\>\Bigl\{\sum_{i=1}^rt_ih_i\colon t_1,\dots,t_r\in\Sigma A_K
^2\Bigr\}$$
be the (pseudo) quadratic module generated by $M$ in $A_K=A\otimes_k
K$. Then we have:
\end{lab}

\begin{prop}\Label{mkvsm}%
In the above situation, if $K/k$ is a finite extension of real fields
such that every ordering of $k$ extends to $[K:k]$ different
orderings of $K$, we have $A\cap M_K=M$.
\end{prop}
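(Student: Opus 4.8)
The plan is to copy the trace argument from the proof of Theorem \ref{improvbound}, checking that it goes through verbatim for (pseudo) quadratic modules. First I note that the inclusion $M\subseteq A\cap M_K$ needs nothing: $\Sigma A^2\subseteq\Sigma A_K^2$ and every $h_i$ lies in $A$, so each element $\sum_is_ih_i$ of $M$ is an element of $A$ that also lies in $M_K$. Hence the real content is the reverse inclusion $A\cap M_K\subseteq M$.

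For that, I would fix sums of squares $a_1,\dots,a_d\in k^*$ together with a $k$-basis $y_1,\dots,y_d$ of $K$ as in \ref{tracerapp}, where $d=[K:k]$; here the hypothesis that every ordering of $k$ has $d$ extensions to $K$ is precisely what allows the trace form of $K/k$ to be diagonalized with sum-of-squares entries. Now take $f\in A$ with a representation $f=\sum_{i=1}^rt_ih_i$ where $t_1,\dots,t_r\in\Sigma A_K^2$. Applying the $A$-linear trace $\tr_{A_K/A}$ and using $h_i\in A$ gives
$$d\cdot f\>=\>\tr_{A_K/A}(f)\>=\>\sum_{i=1}^r\tr_{A_K/A}(t_i)\,h_i.$$
The one thing to verify is that $\tr_{A_K/A}(t_i)\in\Sigma A^2$ for each $i$: expanding $t_i$ as a sum of squares in $A_K$ and each such square in the basis $1\otimes y_1,\dots,1\otimes y_d$, formula \eqref{tralg} turns the trace of each square into $\sum_ja_jx_j^2$ with $x_j\in A$, and this is a sum of squares in $A$ because the $a_j$ are sums of squares in $k$. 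Summing over the squares shows $\tr_{A_K/A}(t_i)\in\Sigma A^2$.

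Finally I would divide by $d$. Since $k$ is a real (hence characteristic-zero) field, $\frac1d\in k$, and $\frac1d=d\cdot\bigl(\frac1d\bigr)^2\in\Sigma k^2\subseteq\Sigma A^2$; multiplying the displayed identity by $\frac1d$ then yields $f=\sum_{i=1}^rs_ih_i$ with $s_i=\frac1d\tr_{A_K/A}(t_i)\in\Sigma A^2$, using that a product of two sums of squares is a sum of squares. Thus $f\in M$, as desired. I do not expect any genuine obstacle here: the only points requiring care are that the coefficients $a_j$ coming out of the trace form are really sums of squares in $k$ — guaranteed by the ordering hypothesis via \ref{tracerapp} — and that clearing the denominator $d$ keeps us inside $\Sigma A^2$. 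The quantitative refinements of Theorem \ref{improvbound} have no counterpart here, since $M$ is only a pseudo quadratic module and one has no control on the number of squares entering each coefficient of the $h_i$.
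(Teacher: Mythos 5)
Your proposal is correct and follows essentially the same route as the paper: take the trace $\tr_{A_K/A}$ of the representation $f=\sum_i t_ih_i$, use the diagonalization of the trace form from \ref{tracerapp} (formula \eqref{tralg}) to see that each $\tr_{A_K/A}(t_i)$ lies in $\Sigma A^2$, and absorb the factor $\frac1d$ as a sum of squares. The only difference is that you spell out the trivial inclusion $M\subseteq A\cap M_K$ and the harmlessness of dividing by $d$, which the paper leaves implicit.
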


\begin{proof}
Let $t_1,\dots,t_r\in\Sigma A_K^2$ be such that $f:=\sum_{i=1}^rt_i
h_i$ lies in $A$. Taking the trace of $f$ gives
$$f\>=\>\frac1d\sum_{i=1}^r\tr_{A_K/A}(t_i)\,h_i.$$
For any $i=1,\dots,r$, the trace $\tr_{A_K/A}(t_i)$ lies in $\Sigma
A^2$, see \ref{tracerapp}. It follows that $f\in M$.
\end{proof}


\section{Construction of counterexamples}\Label{sect:stq}%

We construct a family of forms with rational coefficients which are
sums of squares over $\R$ but not over~$\Q$:

\begin{thm}\Label{stq}%
Let $n\ge2$, and let $d\ge4$ be an even number. There exists a form
$f\in\Q[x_0,\dots,x_n]$ of degree~$d$ with the following properties:
\begin{itemize}
\item[(1)]
$f$ is irreducible over $\Q$, and decomposes into a product of $d$
linear forms over $\C$;
\item[(2)]
$f$ is a sum of two squares in $\R[x_0,\dots,x_n]$;
\item[(3)]
$f$ is not a sum of any number of squares in $\Q[x_0,\dots,x_n]$.
\end{itemize}
\end{thm}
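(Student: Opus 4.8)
The plan is to produce $f$ as a norm form. Fix a number field $K$ of degree $d$ over $\Q$ that is a product of a totally real field and exactly one pair of complex conjugate embeddings — more precisely, I would like $K$ to have exactly one archimedean place that is complex (the remaining $d-2$ embeddings being real), so that $K\otimes_\Q\R\cong\R^{d-2}\times\C$. Pick a primitive element $\theta$ and a linear form $\ell=x_0+\theta x_1+\cdots$ (using, say, a power basis, adjusting coefficients so the relevant degeneracies do not occur); then set
$$f\>:=\>N_{K/\Q}(\ell)\>=\>\prod_{\sigma}\sigma(\ell),$$
the product running over the $d$ embeddings $\sigma\colon K\into\C$. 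This $f$ is a form of degree $d$ with rational coefficients, and property~(1) is essentially built in: it factors into $d$ linear forms over $\C$, and its irreducibility over $\Q$ follows from the transitivity of the Galois action on the embeddings together with a genericity choice of $\ell$ ensuring no proper sub-product is Galois-stable (equivalently, $f$ has no rational factor). To get (2): group the $d$ factors into the $d-2$ real ones and the one complex-conjugate pair; the real linear forms are squares only after being written as $(\prod_{\sigma\text{ real}}\sigma(\ell))^2$ up to grouping — more carefully, pair up the real embeddings if $d-2$ is even, but since $d$ is even, $d-2$ is even, so I can pair the $d-2$ real factors into $(d-2)/2$ products $g_j$ each a real form, and $f=g^2+\text{(the complex pair)}$ where actually the cleanest statement is: $\sigma\bar\sigma(\ell)=|\sigma(\ell)|^2$ is a sum of two squares of real forms (real and imaginary parts), and the product of the real embeddings is a single real form squared after suitable pairing; combining via the two-squares composition identity $(a^2)(b^2+c^2)=(ab)^2+(ac)^2$ shows $f$ is a sum of two squares in $\R[x_0,\dots,x_n]$.

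The heart of the matter is property~(3), and this is where I expect the real work. Suppose for contradiction that $f=\sum_k p_k^2$ with $p_k\in\Q[x_0,\dots,x_n]$. Localize and complete: pass to the field $\Q(x_0,\dots,x_n)$, or better, look at the factorization of $f$. Since $f=N_{K/\Q}(\ell)$ and $\ell$ is irreducible in $\overline\Q[x_0,\dots,x_n]$, the prime factorization of $f$ over $\Q[x_0,\dots,x_n]$ has $f$ irreducible (by (1)); consider the residue field at the prime ideal $(\ell)$ — passing to the generic point of the hypersurface $\ell=0$, i.e.\ working in the function field $\kappa$ of the variety $\{\ell=0\}$, which is isomorphic to the rational function field in $n$ variables over $K$. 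The point is to examine $f$ as a sum of squares in the \emph{local ring} or rather to use the behaviour of $f$ under a well-chosen discrete valuation. Concretely: choose the valuation $v$ on $\Q(x_0,\dots,x_n)$ given by order of vanishing along one conjugate $\sigma(\ell)=0$ over $\overline\Q$; but that is not $\Q$-rational. Instead, the correct tool is the valuation attached to the $\Q$-irreducible $f$ itself: $v=\mathrm{ord}_f$. Then $v(f)=1$, odd, while in any identity $f=\sum p_k^2$ we would need $\min_k v(p_k^2)=\min_k 2v(p_k)$ even and the leading terms to cancel — but a sum of squares of elements of the residue field (which must be formally real for this cancellation obstruction to bite) cannot vanish unless all leading coefficients do. So the residue field of $\mathrm{ord}_f$, which is the function field of the hypersurface $f=0$ over $\Q$, hence contains $K$, must be formally real for the argument — but $K$ is \emph{not} totally real (it has a complex place!), so $K$ is still formally real iff it has at least one real embedding, which it does ($d-2\ge2$). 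Hmm — so the naive valuation argument gives nothing; this is exactly why the example is subtle, and why the totally-real case (Hillar) behaves differently.

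So the actual obstruction must be finer. The right approach, I believe, is to localize at a \emph{real closed point} or to use the theory of signatures / the Witt ring, exploiting that $f$ changes sign. Here is the plan I would pursue: the hypersurface $X\colon f=0$ in $\P^n$ is irreducible over $\Q$ but over $\R$ it is the union of the $d-2$ real hyperplanes $\sigma(\ell)=0$ together with a codimension-$2$ real variety (the common zero of real and imaginary parts of the complex pair). A sum of squares representation $f=\sum p_k^2$ over $\Q$ forces, near a smooth real point of one of the real hyperplane components $H_\sigma=\{\sigma(\ell)=0\}$, that $f$ vanishes to even order along $H_\sigma$ in the local ring at that point — because over $\R$ (in fact in the local ring at a real point of $H_\sigma$, whose residue field $\R(H_\sigma)$ is formally real) a sum of squares vanishing along $H_\sigma$ to order $1$ is impossible. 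But $f=N_{K/\Q}(\ell)$ vanishes to order exactly $1$ along each $H_\sigma$ (the conjugate hyperplanes are distinct for generic $\ell$). The subtlety is that $H_\sigma$ is not defined over $\Q$; one must check that the representation $f=\sum p_k^2$ with $p_k\in\Q[\underline x]$ still produces, after base change to $\R$, a valuation on $\R(\underline x)$ along $H_\sigma$ — which it does, since $H_\sigma$ is a genuine prime divisor of $f$ over $\R$ — and then $\mathrm{ord}_{H_\sigma}(f)=1$ is odd while $\mathrm{ord}_{H_\sigma}(\sum p_k^2)$ is even (here using that $\R(H_\sigma)$ is formally real, a sum of squares of the leading forms cannot cancel). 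That contradiction proves~(3). The key genericity input, and the step I expect to fight with, is arranging $\ell$ (equivalently, choosing $K$ and $\theta$) so that: (a) the $d$ conjugate hyperplanes $\sigma(\ell)=0$ are pairwise distinct (so $f$ is squarefree over $\C$, giving the odd order $1$), (b) $f$ is irreducible over $\Q$, and (c) $K$ has signature $(d-2,1)$ so that (2) holds — all three can be met by an explicit choice (e.g.\ $K=\Q[t]/(t^d - a t - b)$ or a trinomial with one complex root pair, and $\ell = x_0 + t x_1$, then possibly adding $\x$-variables $x_2,\dots,x_n$ harmlessly), and verifying (a)–(c) simultaneously for an honest explicit family is the technical crux of the theorem.
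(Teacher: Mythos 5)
There is a genuine gap, and it is fatal to the construction rather than to a single step. You choose $K$ with signature $(d-2,1)$, i.e.\ with $d-2\ge2$ real embeddings. Then for each real embedding $\sigma$ the factor $\sigma(\ell)$ is a real linear form, and (under the same genericity you need elsewhere, namely that the $d$ conjugate hyperplanes are distinct) $f=N_{K/\Q}(\ell)$ vanishes to order exactly one along the real hyperplane $\sigma(\ell)=0$ and therefore changes sign on $\R^{n+1}$. So $f$ is not even nonnegative, and property (2) fails. Your attempted repair of (2) does not work: the product of the $d-2$ distinct real factors is an indefinite form, not the square of a real form, and no ``pairing'' of distinct real linear forms produces a square. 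In fact your own argument for (3) makes the inconsistency visible: the obstruction you invoke (odd order of vanishing of $f$ along a real prime divisor whose residue field is formally real) obstructs sums of squares over $\R$ exactly as well as over $\Q$, so any $f$ it applies to can never satisfy (2). A construction of this kind cannot separate $\R$ from $\Q$; the obstruction must be arithmetic, not a sign obstruction.

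What is actually needed is the opposite choice at the archimedean places: take $K$ \emph{totally imaginary} of degree $d=2m$ (so complex conjugation acts without fixed points on $\Hom(K,\C)$). Then the $d$ conjugate linear factors pair into $m$ complex-conjugate pairs, $f=\prod_{i=1}^m l_i\ol{l_i}$ is a product of $m$ positive semidefinite real quadratic forms, each a sum of two squares, and the two-square composition identity gives (2). Property (3) then requires a genuinely different argument, for which one also imposes that the Galois action on $\Hom(K,\C)$ be $2$-transitive (e.g.\ Galois group $S_d$ or $A_d$): if $f=\sum_\nu p_\nu^2$ with $p_\nu\in\Q[\x]$ of degree $m$, each $p_\nu$ must vanish on the $m$ real codimension-two subspaces $L_i\cap\ol{L_i}$ (these carry Zariski-dense sets of real zeros of $f$); Galois invariance of the $p_\nu$ together with $2$-transitivity forces vanishing on all $\binom{d}{2}$ pairwise intersections $L_i\cap L_j$; and the vanishing ideal of that union is generated in degree $d-1>m$ when the hyperplanes are in general position, a contradiction. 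None of these three ingredients (totally imaginary $K$, $2$-transitivity, the degree bound on the vanishing ideal of the union of pairwise intersections) appears in your proposal, so the proof as written does not establish the theorem.
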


For example,
$$f\>=\>x_0^4+x_0x_1^3+x_1^4-3x_0^2x_1x_2-4x_0x_1^2x_2+2x_0^2x_2^2+
x_0x_2^3+x_1x_2^3+x_2^4$$
is such a form.

\begin{lab}\Label{gensetup}%
To prove the theorem we consider the following setup.
Let $K$ be a totally imaginary number field of degree~$d=2m$,
let $E$ be the Galois hull of $K/\Q$, and let $G=\Gal(E/\Q)$ (resp.\
$H=\Gal(E/K)$) be the Galois group of $E$ over~$\Q$ (resp.\ of $E$
over~$K$).
The group $G$ acts transitively on the set $\Hom(K,E)$ of embeddings
$K\to E$ by
$$({}^\sigma\varphi)(\alpha)\>=\>\sigma(\varphi(\alpha))\quad(\alpha
\in K)$$
($\sigma\in G$, $\varphi\in\Hom(K,E)$), thereby identifying the
$G$-set $\Hom(K,E)$ with $G/H$.
Note that $|G/H|=d$.
We fix an embedding $E\into\C$ and denote by $\tau\in G$ the
restriction of complex conjugation to~$E$. Since $K$ is totally
imaginary, $\tau$ acts on $G/H$ without fixpoint.
\end{lab}

\begin{lab}\Label{linformgen}%
We extend the $G$-action on $E$ to an action on $E[\x]=E[x_0,\dots,
x_n]$ by letting $G$ act on the coefficients. Let $l\in K[\x]$ be a
linear form, and let $L\subset\P^n$ be the hyperplane $l=0$. We
assume that the $d$ Galois conjugates of $L$ are in general position,
that is, the intersection of any $r\le n+1$ of them has
codimension~$r$ (the empty set is assigned the codimension~$n+1$).
For example, this condition is satisfied when $\alpha$ is a primitive
element for $K/\Q$ and
$$l\>=\>\sum_{i=0}^n\alpha^ix_i,$$
as one sees by a Vandermonde argument.
We consider the form
\begin{equation}\Label{dfnf}%
f\>:=\>\prod_{\sigma H\in G/H}{}^\sigma l\>=\>N_{K/\Q}(l)
\end{equation}
of degree $d$. Clearly, $f$ has rational coefficients and is
irreducible over $\Q$.
Moreover, since $\tau$ acts on $G/H$ without fixpoint, we can choose
$m=\frac d2$ cosets $\sigma_1H,\dots,\sigma_mH$ in $G/H$ which
represent the $\tau$-orbits. Writing $l_j:={}^{\sigma_j}l$
($j=1,\dots,m$)
we therefore have
$$f\>=\>\prod_{i=1}^ml_i\ol{l_i}$$
where bar denotes coefficientwise complex conjugation. This shows
that $f$ is a product of $m$ quadratic forms over $\R$, each of which
is a sum of two squares over $\R$. In particular, $f$ is a sum of two
squares in $\R[\x]$.
\end{lab}

\begin{lab}\Label{keyarg}%
Let us label the $d=2m$ hyperplanes ${}^\sigma l=0$ ($\sigma H\in
G/H$) by $L_1,\dots,L_d$. By our assumption of general position, the
$\choose d2$ pairwise intersections $M_{ij}=L_i\cap L_j$ ($1\le i<j
\le d$) are all distinct, and are linear subspaces of $\P^n$ of
codimension two.
Exactly $m$ of the $M_{ij}$ are conjugation-invariant, and they
correspond to the $\tau$-orbits in $G/H$.
We say that $M_{ij}$ is real if it is conjugation-invariant.

We now assume that the action of $G$ on $G/H$ is $2$-transitive.
Then $G$ acts transitively on the set $\{M_{ij}\colon1\le i<j\le
d\}$. We claim that $f$ cannot be a sum of squares of forms with
rational coefficients.
To see this, suppose
$$f\>=\>p_1^2+\cdots+p_r^2$$
where $p_1,\dots,p_r$ are forms of degree $m$ in $\Q[\x]$. Each
$p_\nu$ vanishes identically on the $m$ real intersections $M_{ij}$.
By Galois invariance and by the transitivity assumption, the $p_\nu$
have to vanish identically on all $\choose d2$ intersections
$M_{ij}$. But there is no nonzero form of degree $m$ with this
property. In fact, we have $m\le d-2$ since $d=2m\ge4$, and there is
not even any nonzero such form of degree $d-2$. This follows from the
next lemma, which we state in a stronger version with a view to a
later application:
\end{lab}

\begin{lem}\Label{intptslines2}%
Let $k$ be a field and $\x=(x_0,\dots,x_n)$ with $n\ge2$, and let
$l_1,\dots,l_d\in k[\x]$ be linear forms such that the hyperplanes
$L_i=\{l_i=0\}$ ($i=1,\dots,d$) are in general position. Let $I$ be
the vanishing ideal of $\bigcup_{1\le i<j\le d}L_i\cap L_j$ in
$k[\x]$. Then $I$ is generated by the $d$ forms
$$p_i\>:=\>\frac{l_1\cdots l_d}{l_i}\quad(i=1,\dots,d)$$
of degree $d-1$.
\end{lem}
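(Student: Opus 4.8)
The plan is to prove the two inclusions separately. The inclusion $\langle p_1,\dots,p_d\rangle\subseteq I$ is immediate: each $p_i=\prod_{j\ne i}l_j$ vanishes on every $L_a\cap L_b$, since such an intersection lies on at least one $L_j$ with $j\ne i$ (indeed, at least one of $a,b$ differs from $i$). The substance is the reverse inclusion $I\subseteq\langle p_1,\dots,p_d\rangle$. For this I would first reduce to the case $n=2$. Since the $L_i$ are in general position, after a linear change of coordinates one may assume $n$ is as small as the configuration allows; more concretely, the subvariety $\bigcup_{i<j}L_i\cap L_j$ is a union of codimension-two linear subspaces, and one can intersect with a generic plane $\P^2\subset\P^n$ to cut it down to a finite set of $\binom d2$ points, the pairwise intersections of $d$ lines in general position in $\P^2$. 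A homogeneous form vanishes on the original configuration iff its restriction to the generic plane vanishes on these points, and the $p_i$ restrict to the analogous products of $d-1$ lines; so it suffices to treat $n=2$. (Alternatively one can argue directly with the ideal, but the geometric reduction is cleanest.)

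So assume $n=2$ and let $P_1,\dots,P_{\binom d2}$ be the $\binom d2$ intersection points of $d$ lines $L_1,\dots,L_d$ in general position in $\P^2$, each $L_i$ carrying exactly $d-1$ of them. Let $f\in k[x_0,x_1,x_2]$ be homogeneous with $f(P_a)=0$ for all $a$; I want to write $f=\sum_i g_ip_i$ with $g_i$ homogeneous. The main step is a descending induction on $\deg f$ (equivalently, a Bézout/linear-algebra count): restrict $f$ to the line $L_d$. On $L_d\cong\P^1$ the form $f|_{L_d}$ is a binary form of degree $\deg f$ vanishing at the $d-1$ points $L_d\cap L_i$ ($i<d$); if $\deg f\le d-2$ this forces $f|_{L_d}=0$, but I want the statement for all degrees, so in general $f|_{L_d}$ is divisible by $(l_1\cdots l_{d-1})|_{L_d}=p_d|_{L_d}$. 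Choose a homogeneous $g_d$ with $g_d\,p_d\equiv f\pmod{l_d}$ — this is possible because $p_d$ is a unit times a fixed form on $L_d$ and one is just dividing binary forms — then $f-g_dp_d$ is divisible by $l_d$, say $f-g_dp_d=l_d\,f'$. Now $f'$ vanishes at all points $P_a$ lying off $L_d$, i.e. at the $\binom{d-1}2$ intersection points of $L_1,\dots,L_{d-1}$; by induction on $d$ (the base case $d=2$ being trivial, where $I$ is the ideal of the single point $L_1\cap L_2$ and is generated by $l_1,l_2=p_2,p_1$), $f'$ lies in the ideal generated by $\prod_{j\ne i,\,j\le d-1}l_j$ for $i\le d-1$; multiplying back by $l_d$ turns each such generator into $p_i$, so $f\in\langle p_1,\dots,p_d\rangle$.

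The point requiring the most care is the divisibility claim that $f|_{L_d}$ is a multiple of $p_d|_{L_d}$: one must check that the $d-1$ points $L_d\cap L_i$ are distinct on $L_d$ (this is exactly general position, as no three $L_i$ are concurrent) and that $p_d|_{L_d}=\big(\prod_{i<d}l_i\big)|_{L_d}$ is not identically zero and has precisely these $d-1$ points as its zero divisor — again general position, since $L_d\not\subseteq L_i$. Given that, divisibility of binary forms over a field does the rest, and the double induction (on $d$, with an inner division step in each line $L_d$) closes. One should also note the lemma is stated over an arbitrary field $k$, so no separability or characteristic hypotheses enter; only the combinatorial general-position condition is used, and it is preserved under passing from $\{L_1,\dots,L_d\}$ to $\{L_1,\dots,L_{d-1}\}$ and under the generic-plane restriction.
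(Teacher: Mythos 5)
Your inner induction on $d$ (divide the restriction of $f$ on $L_d$ by the product of the other linear forms, peel off a factor $l_d$, and induct on the remaining $d-1$ hyperplanes) is exactly the argument the paper uses, and your care about the relevant points being distinct on $L_d$, about $l_d$ not vanishing at the intersections of $L_1,\dots,L_{d-1}$, and about the base case $d=2$ is correct. The genuine gap is the preliminary reduction to $n=2$. The claimed equivalence ``a homogeneous form vanishes on the configuration iff its restriction to a generic plane vanishes at the $\binom{d}{2}$ points'' is false in the ``if'' direction: vanishing at finitely many points of a plane section does not force vanishing on the codimension-two subspaces $L_i\cap L_j$. More importantly, even using only the harmless direction, what the $n=2$ case would give you is that $f|_{\P^2}$ lies in the ideal generated by the $p_i|_{\P^2}$ inside the coordinate ring of the plane; the lemma asserts $f\in(p_1,\dots,p_d)$ in $k[x_0,\dots,x_n]$, and you give no argument lifting this membership back. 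An identity $f|_{\P^2}=\sum_i g_i\,p_i|_{\P^2}$ only yields that $f-\sum_i\tilde g_i p_i$ lies in the ideal of the chosen plane for some lifts $\tilde g_i$, and that remainder is not controlled; ideal membership cannot be tested on a generic plane section without an extra (inductive lifting plus degree-bound) argument that you do not supply. Since the paper needs the lemma for all $n\ge2$, proving it only for $n=2$ is not enough.

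The repair is immediate, and it is what the paper does: drop the reduction and run your induction on $d$ directly in $\P^n$. Restrict $g\in I$ to the hyperplane $L_1\cong\P^{n-1}$; the intersections $L_1\cap L_i$ ($i=2,\dots,d$) are distinct hyperplanes of $L_1$ by general position, and $k[L_1]$ is a polynomial ring in $n\ge2$ variables, hence a UFD in which the restrictions $l_i|_{L_1}$ are pairwise non-associate primes; so $g|_{L_1}$ is divisible by $(l_2\cdots l_d)|_{L_1}$, giving $g=g_1p_1+l_1h$. Then $h$ vanishes on each $L_i\cap L_j$ with $2\le i<j\le d$ because $l_1h\in(l_i,l_j)$ and $l_1\notin(l_i,l_j)$ (again general position, the ideal $(l_i,l_j)$ being prime), and the inductive hypothesis applied to $L_2,\dots,L_d$ followed by multiplication with $l_1$ finishes exactly as in your $n=2$ computation. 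In short: everything you did with $\P^1$-restrictions works verbatim with $\P^{n-1}$-restrictions, and no generic plane is needed.
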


In particular, for $d\ge3$ there is no hypersurface of degree $d-2$
containing $L_i\cap L_j$ for all $1\le i<j\le d$.

\begin{proof}
The assertion is obviously true for $d\le2$, so we can assume that
$d\ge3$ and the lemma is proved for smaller values of~$d$. Clearly we
have $(p_1,\dots,p_d)\subset I$.
Conversely let $g\in I$ be a form. Since $L_1\cap L_2,\,\dots,\,
L_1\cap L_d$ are distinct hypersurfaces in $L_1$, and since $g$
vanishes on all of them, we see that $g$ is a multiple of $l_2\cdots
l_d=p_1$ modulo~$l_1$, that is, $g=g_1p_1+l_1h$ with suitable forms
$g_1$ and $h$. The form $h$ vanishes on the pairwise intersections of
the hypersurfaces $L_2,\dots,L_d$.
Writing $q_i:=(l_2\cdots l_d)/l_i$ for $i=2,\dots,d$, it follows from
the inductive hypothesis that $h\in(q_2,\dots,q_d)$. Since $l_1q_i=
p_i$ for $i=2,\dots,d$, we conclude $g\in(p_1,\dots,p_d)$, as
desired.
\end{proof}

Let $\ol\Q$ denote an algebraic closure of $\Q$. Summarizing, we have
proved:

\begin{thm}\Label{thmprec}%
Let $n\ge2$, let $K/\Q$ be a totally imaginary number field of degree
$d\ge4$, and let $l\in K[x_0,\dots,x_n]$ be a linear form whose $d$
Galois conjugates over $\Q$ are in general position. If the action of
$\Gal(\ol\Q/\Q)$ on $\Hom(K,\C)$ is $2$-transitive, then
$$f\>:=\>N_{K/\Q}(l)$$
is a form of degree~$d$ with rational coefficients that is
irreducible over $\Q$ and a sum of two squares over $\R$, but not a
sum of any number of squares over~$\Q$.
\qed
\end{thm}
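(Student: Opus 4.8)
The plan is to assemble the ingredients already developed in \ref{gensetup}--\ref{keyarg} together with Lemma \ref{intptslines2}. Let $E\subset\C$ be the Galois hull of $K/\Q$, put $G=\Gal(E/\Q)$ and $H=\Gal(E/K)$, and let $\tau\in G$ be the restriction of complex conjugation. Every embedding $K\to\C$ has image in $E$, so the $G$-set $\Hom(K,\C)$ is identified with $\Hom(K,E)\cong G/H$; hence the hypothesis says precisely that $G$ acts $2$-transitively, in particular transitively, on $G/H$. Then the form $f=N_{K/\Q}(l)=\prod_{\sigma H\in G/H}{}^\sigma l$ of \ref{linformgen} is defined: it lies in $\Q[\x]$, has degree $d$, and splits over $\C$ as the product of the $d$ pairwise distinct linear forms ${}^\sigma l$. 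Since $f$ has rational coefficients and $G$ permutes these $d$ factors transitively, $f$ admits no nontrivial factorization over $\Q$, so it is irreducible over~$\Q$.

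Because $K$ is totally imaginary, $\tau$ acts on $G/H$ without fixed point. Choosing one coset $\sigma_jH$ from each of the $m=\frac d2$ $\tau$-orbits and writing $l_j={}^{\sigma_j}l$ gives $f=\prod_{j=1}^m l_j\ol{l_j}=g\ol g$ with $g=l_1\cdots l_m\in\C[\x]$; decomposing $g=P+\sqrt{-1}\,Q$ with $P,Q\in\R[\x]$ forms of degree $m$ yields $f=P^2+Q^2$, so $f$ is a sum of two squares over $\R$. It remains to show that $f$ is not a sum of squares over $\Q$.

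For this I would run the argument of \ref{keyarg}. Suppose $f=p_1^2+\cdots+p_r^2$ with forms $p_\nu\in\Q[\x]$, necessarily of degree $m$. Label the hyperplanes ${}^\sigma l=0$ by $L_1,\dots,L_d$; by general position the $\choose d2$ codimension-two linear subspaces $M_{ij}=L_i\cap L_j$ are distinct, and exactly $m$ of them are $\tau$-invariant, hence defined over $\R$ --- call these the real ones. For a real $M_{ij}$ the set of its $\R$-points is Zariski dense in $M_{ij}$, and $p_1^2+\cdots+p_r^2=f$ vanishes there because $M_{ij}\subset L_i$; since the $p_\nu$ have real coefficients, each $p_\nu$ vanishes on those $\R$-points, hence on $M_{ij}$. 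Now every $\sigma\in G$ fixes each $p_\nu$ and carries $M_{ij}$ to some $M_{i'j'}$, and $2$-transitivity of $G$ on $G/H$ makes $G$ transitive on $\{M_{ij}\colon i<j\}$; therefore each $p_\nu$ vanishes on all $\choose d2$ of the $M_{ij}$. By Lemma \ref{intptslines2} the vanishing ideal of $\bigcup_{i<j}M_{ij}$ contains no nonzero form of degree $\le d-2$, and $m=\frac d2\le d-2$ since $d\ge4$; hence every $p_\nu=0$, contradicting $f\ne0$.

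The only genuinely substantive inputs are Lemma \ref{intptslines2} (already proved) and the step that upgrades ``$p_\nu$ vanishes on the $m$ real $M_{ij}$'' to ``$p_\nu$ vanishes on all $\choose d2$ of them'', which is where I expect the sole subtlety. That step rests on two facts: each real $M_{ij}$ is cut out by forms over $\R$, so that a real sum of squares vanishing on its $\R$-points has every summand vanishing there; and $2$-transitivity of $G$ on $G/H$ upgrades to transitivity on unordered pairs, so that a $G$-fixed (equivalently, rational) form cannot vanish on the real subspaces without vanishing on all of them. The rest is routine manipulation of the set-up in \ref{gensetup}--\ref{linformgen}.
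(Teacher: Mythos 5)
Your proposal is correct and follows essentially the same route as the paper: the identification of $\Hom(K,\C)$ with $G/H$, the factorization $f=\prod_j l_j\ol{l_j}$ over the $\tau$-orbits giving a sum of two real squares, and the argument that rational $p_\nu$ in a square decomposition would vanish on all $\binom d2$ intersections $M_{ij}$, contradicting Lemma \ref{intptslines2} since $m\le d-2$. The only cosmetic difference is that you write $f=g\ol g=P^2+Q^2$ directly instead of invoking multiplicativity of sums of two squares, which is the same computation.
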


\begin{lab}
Clearly, this implies the statement of Theorem \ref{stq}: We may
start with any totally imaginary number field $K$ of degree $d\ge4$
for which the Galois action on $\Hom(K,\C)$ is $2$-transitive. For
example, the Galois group may act as the alternating or full
symmetric group on $d$ letters. Picking any primitive element
$\alpha$ of $K/\Q$, the form $f$ constructed as in \ref{linformgen}
satisfies all the properties of \ref{stq}.
\end{lab}

\begin{example}\Label{bspprec}%
To produce an explicit example, take the field $K=\Q(\alpha)$ where
$\alpha^4-\alpha+1=0$. In this case the Galois group acts as the full
symmetric group on the roots of $t^4-t+1$, as one sees by reducing
modulo~$2$ and modulo~$3$.
Starting with $l=x_0+\alpha x_1+\alpha^2x_2$, one obtains the form
$f$ displayed after Theorem \ref{stq}.

To see a sum of squares representation of $f$ explicitly, let $\beta$
be a root of $t^3-4t-1=0$ (the cubic resolvent of $t^4-t+1$). Then
the following decomposition holds:
$$4f\>=\>\Bigl(2x_0^2+\beta x_1^2-x_1x_2+(2+\frac1\beta)x_2^2\Bigr)^2
-\beta\Bigl(2x_0x_1-\frac{x_1^2}\beta+\frac{2x_0x_2}\beta+\beta
x_1x_2-x_2^2\Bigr)^2.$$
The cubic field $\Q(\beta)$ is totally real, but not Galois over
$\Q$.
Its three places send $\beta$ to real numbers approximately equal to
$$-1.860805854,\quad-0.2541016885,\quad2.114907542,$$
respectively. Therefore, the first two embeddings give each a
representation of $f$ as a sum of two squares of real quadratic
forms. These representations are defined over the real field $F=
\Q(\sqrt{-\beta})$ of degree six. Up to equivalence, these are the
only two representations of $f$ over $\R$ as a sum of two squares.
Every other sum of squares representation of $f$ over $\R$ is
(equivalent to) a sum of four squares, and arises as a convex
combination of the two extremal representations.
\end{example}

\begin{rem}\Label{2transweakened}%
For the conclusion of Theorem \ref{thmprec}, it is not necessary that
$G$ acts $2$-transitively on $G/H$, or equivalently, that $G$ acts
transitively on the set $\{M_{ij}\colon1\le i<j\le d\}$ (see
\ref{keyarg}). It suffices that any $G$-orbit in this set contains at
least one space $M_{ij}$ which is real, i.e., invariant under complex
conjugation $\tau$. In terms of the $G$-action on $G/H$, this means
the following condition:
\begin{quote}
\begin{itemize}
\item[$(*)$]
\emph{For any $x,\,y\in G/H$ with $x\ne y$ there exist $z\in G/H$ and
$\sigma\in G$ such that $x=\sigma z$ and $y=\sigma\tau z$.}
\end{itemize}
\end{quote}
For $d=|G/H|=4$, condition $(*)$ implies $2$-transitivity of $G$ on
$G/H$. But for $d\ge6$ there are examples where $G$ satisfies $(*)$
without being $2$-transitive. The simplest such example is given by
the group $G$ of rotations of a regular cube $P$, acting on the set
$F$ of (two-dimensional) faces. So $G=S_4$, the symmetric group on
four letters, and $H$ is the cyclic subgroup generated by a $4$-cycle
in $G$. A pair $\{f,f'\}$ of different faces of $P$ consists either
of two faces with a common edge, or of two opposite faces. Hence
there are exactly two $G$-orbits in the set $\choose F2$ of pairs of
faces.
The involution $\tau$ is the rotation of order two around an axis
that joins the midpoints of two opposite edges. Among the three pairs
$\{f,\tau f\}$ ($f\in F$) of faces, one consists of opposite faces,
while the other two consist of adjacent faces. So each pair of faces
is $G$-conjugate to a pair of the form $\{f,\tau f\}$.

An example of a (totally imaginary) number field which realizes this
Galois action on its set of places is $k=\Q(\alpha)$ with
$$\alpha^6-\alpha^5+2\alpha^4+\alpha^3+2\alpha^2+3\alpha+1\>=\>0.$$
The example was found by consulting the Bordeaux number field tables
\cite{bnf}.
\end{rem}

\begin{rem}\Label{generalnbfd}%
We can easily extend Theorem \ref{stq} to real number fields other
than $\Q$. Indeed, let $K$, $E$, $l$, $f$ etc.\ be as in
\ref{linformgen}, and assume that $G=\Gal(E/\Q)$ acts
$2$-transitively on $\Hom(K,E)$. Let $k$ be any number field with at
least one real place, and consider the natural embedding $\phi$ from
$\Gal(kE/k)$ into $G$, induced by restriction of automorphisms.
Then $\phi$ is surjective if (and only if) $E\cap k=\Q$, that is, if
$E$ and $k$ are linearly disjoint over $\Q$. Assuming that this is
the case, we claim that $f$ is not a sum of squares over $k$. Indeed,
by the argument in \ref{keyarg}, the $\choose d2$ intersections
$M_{ij}=L_i\cap L_j$ are all Galois conjugate among each other over
$k$. If there were an identity $f=p_1^2+\cdots+p_r^2$ with forms
$p_\nu\in k[\x]$, the $p_\nu$ would have to vanish on the union of
the $M_{ij}$, which again is impossible by Proposition
\ref{intptslines2}.

Using this way of reasoning, we conclude:
\end{rem}

\begin{cor}\Label{stqgend}%
Let $k$ be any fixed number field with at least one real place, let
$n\ge2$ and $d\ge4$ be even. Then there exists a form $f\in\Q[x_0,
\dots,x_n]$ of degree~$d$ which is a sum of two squares of forms over
$\R$, but not a sum of squares of forms over~$k$.
\end{cor}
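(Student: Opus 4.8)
The plan is to derive the corollary from Remark \ref{generalnbfd}. By that remark it suffices to produce a totally imaginary number field $K$ of degree $d$ whose Galois hull $E$ over $\Q$ has two properties: $\Gal(E/\Q)$ acts $2$-transitively on $\Hom(K,E)$, for which it is enough that $\Gal(E/\Q)\cong S_d$; and $E$ and $k$ are linearly disjoint over~$\Q$. Given such a $K$, one picks a primitive element $\alpha$ of $K/\Q$, forms $l=\sum_{i=0}^n\alpha^ix_i$ and $f=N_{K/\Q}(l)$ as in \ref{linformgen}, and reads off from \ref{linformgen} and \ref{generalnbfd} that $f\in\Q[x_0,\dots,x_n]$ is a form of degree $d$ which is a sum of two squares over $\R$ but not a sum of squares over~$k$.

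To construct $K$, I would take $K=\Q[t]/(g)$ for a polynomial $g\in\Z[t]$ of degree $d$ arranged to satisfy three kinds of conditions. (i) An archimedean condition: $g$ has no real root, so that $K$ is totally imaginary. (ii) Galois conditions at three large auxiliary primes $p_1,p_2,p_3$ that are unramified in $k$: $g$ modulo $p_1$ is irreducible of degree $d$; $g$ modulo $p_2$ factors as a linear form times an irreducible of degree $d-1$; and $g$ modulo $p_3$ has exactly one irreducible factor of degree $2$, all of its other irreducible factors having odd degree. (iii) Separability conditions: $g$ modulo $q$ is separable for every prime $q$ that ramifies in $k$. Reading off Frobenius elements, condition (ii) makes $\Gal(E/\Q)$ a transitive subgroup of $S_d$ containing a $(d-1)$-cycle and a transposition; since a transitive group of degree $d$ containing a $(d-1)$-cycle is $2$-transitive, hence primitive, and a primitive subgroup of $S_d$ containing a transposition is $S_d$, this forces $\Gal(E/\Q)=S_d$. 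The conditions in (ii) and (iii) only constrain $g$ modulo a fixed integer $P$, so they are jointly satisfiable by the Chinese remainder theorem; and since, within a residue class modulo $P$, one may still add $P$ times a large positive multiple of $\prod_{i=1}^{d/2}(t^2+i)$, condition (i) can be met as well.

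Finally, linear disjointness: since $E/\Q$ is Galois, this amounts to $E\cap k=\Q$. On the one hand, $E\cap k\subseteq k$ is unramified at every prime not ramifying in $k$. On the other hand, by condition (iii) and the fact that $E$ is the splitting field of $g$, the extension $E/\Q$ is unramified at every prime $q$ that ramifies in $k$, hence so is $E\cap k$. Thus $E\cap k$ is a number field unramified at all finite primes, so $E\cap k=\Q$ by Minkowski's theorem. I expect the main difficulty to be organizational rather than conceptual: checking that the factorization types prescribed in (ii) genuinely occur modulo the $p_i$ (which needs the $p_i$ not too small) while keeping $g$ irreducible over $\Q$, and making the elementary permutation-group facts used above precise enough to conclude $\Gal(E/\Q)=S_d$ from the three Frobenius cycle types.
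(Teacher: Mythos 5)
Your proposal is correct and follows essentially the same route as the paper: reduce via Remark \ref{generalnbfd} to finding a totally imaginary degree-$d$ field $K$ with $2$-transitive Galois action whose Galois hull is linearly disjoint from $k$, and produce $K$ from a positive definite polynomial with prescribed factorizations modulo auxiliary primes (CRT), with coprimality of ramification plus Minkowski giving $E\cap k=\Q$. The only difference is cosmetic: the paper stops at $2$-transitivity (transitivity plus a $(d-1)$-cycle, i.e.\ your primes $p_1,p_2$ suffice), so your third prime forcing a transposition and the full symmetric group is harmless but unnecessary.
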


In particular, over a real number field there is no analogue of
Hilbert's theorem \cite{h} over $\R$, according to which every
nonnegative ternary quartic form is a sum of squares of quadratic
forms.

\begin{proof}
Given $k$, it suffices by \ref{generalnbfd} to find a totally
imaginary extension $K/\Q$ with Galois hull $E/\Q$ for which $G=\Gal
(E/\Q)$ acts $2$-transitively on $\Hom(K,E)$, and such that $E$ and
$k$ are linearly disjoint. The latter will certainly be the case if
the discriminants of $E$ and $k$ are relatively prime. So the
assertion follows from the next lemma.
\end{proof}

\begin{lem}
For any finite set $S$ of primes and any even number $d$, there
exists a totally imaginary number field $K/\Q$ of degree~$d$ with
Galois hull $E/\Q$, such that the discriminant of $E$ is not
divisible by any prime in $S$, and such that $\Gal(E/\Q)$ is
$2$-transitive on $\Hom(K,E)$.
\end{lem}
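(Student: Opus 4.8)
The plan is to realize the required field $K$ explicitly as a root field of a well-chosen polynomial, using the classical fact that the symmetric group is $2$-transitive, and then to control ramification by imposing congruence conditions on the coefficients. First I would fix the target degree $d$ (even, $d\ge 2$; the interesting case is $d\ge 4$) and aim to construct $K=\Q(\alpha)$ where $\alpha$ is a root of an irreducible polynomial $g(t)\in\Z[t]$ of degree $d$ whose Galois group over $\Q$ is the full symmetric group $S_d$. Since $S_d$ acts $2$-transitively on $\{1,\dots,d\}$, and $\Hom(K,E)$ is identified as a $G$-set with the set of roots of $g$ in $E$, this automatically gives $2$-transitivity of $\Gal(E/\Q)$ on $\Hom(K,E)$. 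It remains to arrange (i) that $K$ is totally imaginary, and (ii) that $\mathrm{disc}(E)$ is coprime to every prime in the given finite set $S$.

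For (ii) the key observation is that a prime $p$ ramifies in $E$ if and only if it ramifies in $K$ (the Galois hull is generated by the conjugates of $K$, so its ramified primes are exactly those ramified in $K$), and $p$ is unramified in $K$ as soon as $p\nmid\mathrm{disc}(g)$, i.e.\ as soon as $g$ stays separable modulo $p$. So it suffices to choose $g$ with $\mathrm{disc}(g)$ a unit modulo each $p\in S$ — equivalently, $g\bmod p$ separable of degree $d$ for all $p\in S$. This is an open congruence condition on the coefficient vector of $g$ modulo $\prod_{p\in S}p$, and it is satisfiable: e.g.\ pick any separable degree-$d$ polynomial over $\F_p$ for each $p$ and use CRT to find $g_0\in\Z[t]$ reducing to these. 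Then any $g$ with $g\equiv g_0$ modulo $M:=\prod_{p\in S}p$ has the right reduction behavior.

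Next I would secure irreducibility, the $S_d$ Galois group, and the totally-imaginary property simultaneously, again via congruence/sign conditions that are compatible with the one above because they involve primes and archimedean places disjoint from $S$ (or can be merged with $S$ harmlessly). For the Galois group being $S_d$: impose that $g$ reduces modulo some auxiliary prime $q_1\notin S$ to an irreducible polynomial (forcing a $d$-cycle in the Galois group and, incidentally, irreducibility of $g$ over $\Q$), and modulo another prime $q_2\notin S$ to a linear factor times an irreducible of degree $d-1$ (forcing a $(d-1)$-cycle); a $d$-cycle together with a $(d-1)$-cycle generate $S_d$ when $d$ is such that... actually, cleaner: arrange the reduction modulo a third prime to be an irreducible quadratic times a product of distinct linear factors of odd total degree, yielding a transposition; a transposition plus a $d$-cycle generate $S_d$. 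For totally imaginary: impose a sign condition at $\infty$, namely choose the real shape of $g$ so that $g$ has no real roots — easy for even $d$ (e.g.\ $g$ close to $(t^2+1)(t^2+2)\cdots$), which is an open condition on the real coefficients. All of these are finitely many independent local conditions at distinct places, so by weak approximation / CRT they can be met together with $g\equiv g_0\pmod M$.

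The main obstacle is the bookkeeping of simultaneously realizing the full symmetric group, irreducibility, total imaginarity, and the prescribed unramifiedness at $S$ — making sure the various local conditions live at genuinely disjoint places (or, where they overlap with $S$, are chosen consistently with the separability requirement). Once the local conditions are seen to be pairwise compatible, assembling $g$ by the Chinese Remainder Theorem and checking that $K=\Q(\alpha)$ then has the asserted properties (totally imaginary, degree $d$, Galois hull $E$ unramified outside a set disjoint from $S$, and $\Gal(E/\Q)\cong S_d$ acting $2$-transitively on the $d$ roots) is routine. Alternatively, one can avoid the group-theoretic gymnastics by invoking a known density statement: a positive proportion of degree-$d$ integer polynomials have Galois group $S_d$ (van der Waerden / Gallagher), and one can further condition on the finitely many congruences at $S$ and on the sign conditions at infinity, but I would prefer the explicit local-conditions construction since it is self-contained and effective.
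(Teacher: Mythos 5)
Your overall strategy is the same as the paper's: take $K=\Q(\alpha)$ for a root $\alpha$ of an integral polynomial $g$ of degree $d$ that is positive definite (hence $K$ totally imaginary), has discriminant prime to $S$ (hence $S$ unramified in $K$, hence in $E$), and has prescribed factorization patterns modulo auxiliary primes, all assembled by CRT. That part is fine, as is the observation that a prime ramifies in $E$ iff it ramifies in $K$. The genuine gap is in your group-theoretic endgame. You discard the condition ``$g \bmod q_2$ is a linear factor times an irreducible of degree $d-1$'' in favor of the ``cleaner'' claim that \emph{a transposition together with a $d$-cycle generates $S_d$}. That claim is false for general (in particular even, composite) $d$: for $d=4$ the elements $(1\,2\,3\,4)$ and $(1\,3)$ generate a dihedral group of order $8$, which is not $S_4$ and not even $2$-transitive; similar imprimitive counterexamples exist for every even $d\ge 4$. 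Moreover, Dedekind's theorem only pins down the \emph{conjugacy class} (cycle type) of the Frobenius element, so you cannot force the transposition to sit in a position compatible with the $d$-cycle. Hence, as written, your construction does not guarantee $S_d$, nor even the $2$-transitivity the lemma actually requires. (A smaller slip: with an irreducible quadratic times distinct linear factors, the remaining degree $d-2$ is even, so your ``odd total degree'' proviso is vacuous; it is harmless only because distinct linear factors already give a transposition.)

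The repair is exactly the condition you mentioned and dropped, and it is what the paper uses: require $g \bmod q_1$ irreducible (so $G$ is transitive and contains a $d$-cycle) and $g\bmod q_2$ a linear factor times an irreducible of degree $d-1$ (so $G$ contains a $(d-1)$-cycle). A transitive subgroup of $S_d$ containing a $(d-1)$-cycle is automatically $2$-transitive, since the stabilizer of the point fixed by the $(d-1)$-cycle acts transitively on the other $d-1$ points and all point stabilizers are conjugate. This is all the lemma needs; the full group $S_d$ is not required. If you insist on $S_d$, you must combine the $(d-1)$-cycle (which gives $2$-transitivity, hence primitivity) with your transposition and invoke Jordan's theorem that a primitive group containing a transposition is the full symmetric group. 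With that correction, the remaining bookkeeping (separable prescribed reductions at the primes of $S$, positivity at infinity obtained e.g.\ by adding a large constant multiple of the modulus, CRT compatibility of conditions at distinct primes) goes through as you describe and matches the paper's proof.
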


\begin{proof}
It suffices to find a monic polynomial $g(x)$ over $\Z$ of degree~$d$
with the following properties: (1)~$g$~is positive definite; (2)~the
discriminant of $g$ is not divisible by any prime in~$S$; (3)~there
exist primes $p$, $q$ such that $g$ mod~$p$ is irreducible and $g$
mod~$q$ is a linear factor times an irreducible polynomial. Given
such $g$, let $K=\Q(\alpha)$ where $\alpha$ is a root of $g$, and let
$E$ be the Galois hull of $K$. Then $K$ has the required properties.
In particular, the action of $G=\Gal(E/\Q)$ on the roots of $g$ is
$2$-transitive since $G$ contains a $(d-1)$-cycle.
Properties (2) and (3) can be guaranteed by arranging a particular
factor decomposition of $g$ modulo~$p$, for finitely many primes~$p$.
So it is clear that (many) polynomials $g$ as above can be found.
\end{proof}


\section{Rational denominators}\Label{sect:denoms}%

\begin{lab}
Let $\x=(x_0,\dots,x_n)$ with $n\ge2$, and let $f\in\Q[\x]$ be a form
of degree $d$ as constructed in Theorem \ref{stq}. In particular, $f$
is a sum of squares over $\R$, but not over $\Q$. By Artin's solution
\cite{a} to Hilbert's 17th problem, $f$ is a sum of squares of
rational functions over $\Q$. In other words, there exists a form
$h\ne0$ in $\Q[\x]$ such that both $h$ and $fh$ are sums of squares
over $\Q$. When $f$ is constructed explicitly as in Section
\ref{sect:stq}, what can be said about the degree of such~$h$? Is it
possible to give explicit constructions for~$h$?

These questions were raised by M.-F.~Roy. We will give a complete
answer for $d=4$, and a partial answer for $d\ge6$, see Theorem
\ref{denomh17} and Proposition \ref{beautiful}.
\end{lab}

\begin{lab}\Label{setup}%
For the following we assume the setup of Theorem \ref{thmprec}. Hence
$K/\Q$ will be a totally imaginary number field of degree $d\ge4$,
with Galois hull $E/\Q$. Letting $G=\Gal(E/\Q)$ and $H=\Gal(E/H)$, we
assume that the action of $G$ on $G/H$ is $2$-transitive. (In fact,
it will suffice for the following to have the weaker condition
\ref{2transweakened} satisfied.) Let $l\in K[\x]$ be a linear form
such that the $d$ Galois conjugates of the hyperplane $l=0$ are in
general position. The form
$$f\>=\>\prod_{\sigma H\in G/H}{}^\sigma l\>=\>N_{K/\Q}(l)$$
in $\Q[\x]$ satisfies the conclusions of Theorem \ref{thmprec}.
\end{lab}

\begin{thm}\Label{denomh17}%
Let $f$ be as in \ref{setup}.
\begin{itemize}
\item[(a)]
There exists a nonzero form $h\in\Q[\x]$ of degree $d-2$, but not of
smaller degree, for which $fh$ is a sum of squares of forms in
$\Q[\x]$.
\item[(b)]
When $d=4$, any $h$ as in (a) is a sum of squares of linear forms in
$\Q[\x]$.
\end{itemize}
\end{thm}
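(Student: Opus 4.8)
First I would set up the relevant spaces. Let $R = \Q[\x]_{d-2}$ denote the space of forms of degree $d-2$, and consider the $\binom{d}{2}$ codimension-two linear subspaces $M_{ij} = L_i \cap L_j$ (for $1 \le i < j \le d$) arising from the hyperplanes $L_i = \{{}^{\sigma_i}l = 0\}$. By Lemma \ref{intptslines2}, no nonzero form of degree $d-2$ vanishes on all of them, which is exactly the reason $f$ is not a sum of squares over $\Q$. So a form $h$ of degree $d-2$ with $fh$ a sum of squares over $\Q$ cannot be required to kill all the $M_{ij}$; instead $h$ must carry the "defect" at the non-real intersections. The key geometric idea: over $\R$ we have $f = \prod_{j=1}^m l_j \ol{l_j}$, and multiplying by a suitable $h$ should turn the obstruction into an actual sum of squares.

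For part (a), the plan is to produce an $h$ explicitly using the complex-conjugate factorization. The non-real intersections $M_{ij}$ come in complex-conjugate pairs; label the $m$ real ones as coming from the $\tau$-orbits. A natural candidate for $h$ is a product or norm of linear forms vanishing on the $\binom{d}{2} - m$ non-real intersections, grouped into conjugate pairs so the product is real and rational — this will have degree $\binom{d}{2} - m = \binom{d}{2} - \tfrac{d}{2}$ if we take one linear form per non-real $M_{ij}$, but that is too big, so instead one wants the minimal-degree form among a carefully chosen family, and Proposition \ref{beautiful} (cited as giving all such $h$ explicitly) is presumably where the precise construction lives. The degree bound "$d-2$ and no smaller" then has two halves: (i) exhibit one $h$ of degree exactly $d-2$ with $fh$ a sum of squares over $\Q$ — here I would write $fh$ as an explicit sum of squares of forms of degree $d-1$, using that $fh$ vanishes (to order two, or as a square) on the right subvariety and invoking a Hilbert-type or Gram-matrix argument over $\Q$; (ii) show no $h$ of degree $< d-2$ works. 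For (ii), if $\deg h = e < d-2$ and $fh = \sum p_\nu^2$ with $\deg p_\nu = (d+e)/2 < d-1$, then each $p_\nu$ vanishes on all the real $M_{ij}$ (since $f$ and $h$ are real-coefficient and $f > 0$ generically... more precisely, $p_\nu^2 \le fh$ forces $p_\nu$ to vanish wherever $f$ does, on the real points of $M_{ij}$), hence by Galois transitivity on all $\binom d2$ of them; but a nonzero form of degree $(d+e)/2 \le d-2$ vanishing on all $M_{ij}$ contradicts Lemma \ref{intptslines2}.

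For part (b), when $d = 4$ we have $d - 2 = 2$, so $h$ is a quadratic form over $\Q$, and $fh$ is a sum of squares of cubic forms. I would argue that $fh$ is nonnegative on $\R^{n+1}$ (it is a product of the nonnegative $f$ with $h$, and for $fh$ to be a sum of squares $h$ must itself be nonnegative), hence $h$ is a positive semidefinite quadratic form over $\Q$; a psd quadratic form over $\Q$ diagonalizes over $\Q$ into a sum of squares of $\Q$-linear forms (the rational spectral/Gram decomposition: write $h = x^T S x$ with $S$ a symmetric psd rational matrix, then $S = \sum c_i v_i v_i^T$ with $c_i \in \Q_{\ge 0}$, $v_i \in \Q^{n+1}$, and since $\Q$ has Pythagoras number $4$ the coefficients $c_i$ are themselves sums of rational squares). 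Hence $h \in \Sigma \Q[\x]^2$ is a sum of squares of linear forms over $\Q$.

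The main obstacle, I expect, is part (a)(i): constructing the explicit $h$ of degree exactly $d-2$ together with an explicit sum-of-squares certificate for $fh$ over $\Q$. The lower bound (a)(ii) and part (b) follow fairly mechanically from Lemma \ref{intptslines2} and elementary quadratic-form theory over $\Q$, but pinning down the right $h$ — presumably via the characteristic subspace / Gram-spectrahedron picture and the conjugation symmetry — is the creative step, and is exactly what Proposition \ref{beautiful} is advertised to handle.
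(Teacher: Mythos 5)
Your lower-bound argument and your treatment of (b) do match the paper's proof: for $\deg h\ge d-2$ the paper argues exactly as you do (each $g_\nu$ in $fh=\sum_\nu g_\nu^2$ vanishes on the real intersections because $fh$ does and the real points there are Zariski dense, Galois transitivity spreads this to all $\binom{d}{2}$ intersections $L_i\cap L_j$, and Lemma \ref{intptslines2} shows the vanishing ideal starts in degree $d-1$), and (b) is, as you say, just the fact that a nonnegative quadratic form over $\Q$ is a sum of squares of linear forms over $\Q$. The genuine gap is the existence half of (a), which you explicitly leave open. You cannot lean on Proposition \ref{beautiful} for it: in the paper that proposition \emph{is} the second half of the proof of Theorem \ref{denomh17} (the proof says it ``will be completed by the next proposition''), so citing it unproved is circular in spirit. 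Your own two suggestions do not close the gap: the ``product/norm of linear forms through the non-real intersections'' has the wrong degree, as you note, and is not the shape of the actual answer; and a ``Hilbert-type or Gram-matrix argument over $\Q$'' provides no mechanism for producing a rational certificate --- the whole point of the theorem is that rational certificates for $f$ itself do not exist, so some arithmetic input about $K$ is indispensable.

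That missing input is the level of $K$: since $K$ is totally imaginary, $-1$ is a sum of four squares in $K$, so there exist $a_1,\dots,a_r\in K$, not all zero, with $a_1^2+\cdots+a_r^2=0$ (e.g.\ $r=5$, $a_1=1$). Put $g_\nu:=\tr_{K/\Q}(a_\nu f/l)=\sum_{i=1}^d\sigma_i(a_\nu)\,f/l_i\in\Q[\x]$, a form of degree $d-1$. Modulo $l=l_1$ every $f/l_i$ with $i\ne1$ vanishes, so $\sum_\nu g_\nu^2\equiv\bigl(\sum_\nu a_\nu^2\bigr)(f/l)^2\equiv0\pmod l$; since $\sum_\nu g_\nu^2$ has rational coefficients, Galois invariance makes it divisible by every $l_i$, hence by $f$, giving $fh=\sum_\nu g_\nu^2$ with $h\in\Q[\x]$ of degree $d-2$, and $h\ne0$ because the forms $f/l_1,\dots,f/l_d$ are linearly independent (so already $g_1=\tr(f/l)\ne0$ when $a_1=1$). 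This trace construction, together with the converse that every admissible sum-of-squares representation of a multiple $fh$ in degree $2d-2$ arises this way, is exactly the content of Proposition \ref{beautiful}; it is the step your outline still needs to supply.
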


\begin{proof}
Let $0\ne h\in\Q[\x]$ be a form for which $fh$ is a sum of squares of
forms over $\Q$, say $fh=g_1^2+\cdots+g_r^2$ with forms $g_1,\dots,
g_r\in\Q[\x]$. Let $l=l_1,l_2,\dots,l_d\in\ol\Q[\x]$ be the linear
forms that are Galois conjugate to $l$, let $L_i$ be the hyperplane
$l_i=0$, and consider the pairwise intersections $L_i\cap L_j$ ($1\le
i<j\le d$). The forms $g_\nu$ vanish identically on those
intersections $L_i\cap L_j$ that are real (there are $\frac d2$
such). Since the action of $G$ permutes the $L_i\cap L_j$
transitively, and since the $g_\nu$ have rational coefficients, the
$g_\nu$ vanish identically on $\bigcup_{i<j}L_i\cap L_j$. The
vanishing ideal $I$ of this union (inside $\ol\Q[\x]$) is generated
by the forms $p_i:=\frac f{l_i}$ ($i=1,\dots,d$), by Lemma
\ref{intptslines2}. We already conclude that $\deg(g_\nu)\ge d-1$,
and hence $\deg(h)\ge d-2$.

It remains to construct a form $h$ of exact degree $d-2$ for which
$fh$ is a sum of squares over $\Q$. Note that assertion (b) is clear
from (a), since here $h$ is a quadratic form over $\Q$ and is
nonnegative on $\R^{n+1}$.

The proof of Theorem \ref{denomh17} will therefore be completed by
the next proposition. It gives a fully explicit rendering of the
theorem:
\end{proof}

\begin{prop}\Label{beautiful}%
Let $f$ be as in \ref{setup}. For a form $g\in\Q[\x]$ of degree
$2d-2$, the following conditions are equivalent:
\begin{itemize}
\item[(i)]
$g$ is divisible by $f$ and is a sum of squares of forms in $\Q[\x]$;
\item[(ii)]
there exist $r\ge1$ and elements $a_1,\dots,a_r\in K$ with $a_1^2+
\cdots+a_r^2=0$ such that
$$g\>=\>\sum_{\nu=1}^r\Bigl(\tr_{K/\Q}\bigl(\frac{a_\nu f}l\bigr)
\Bigr)^2.$$
\end{itemize}
If (i) and (ii) hold, then conversely every sum of squares
representation of $g$ in $\Q[\x]$ has the form stated in (ii), for
suitable $a_\nu\in K$ with $\sum_\nu a_\nu^2=0$.
\end{prop}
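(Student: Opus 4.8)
The plan is to establish the equivalence by passing to the algebra $A_K = A \otimes_{\Q} K$ where $A = \Q[\x]$, and exploiting the trace-form identity \eqref{tralg}, exactly as in Section \ref{sect:totreal}. The key observation is that $A_K = K[\x]$, and inside $K[\x]$ the form $f = N_{K/\Q}(l)$ factors as $f = l \cdot \prod_{\sigma H \ne H} {}^\sigma l$; writing $p := f/l \in K[\x]$, we get $f = l \cdot p$ with $p$ a form whose Galois conjugates are (up to scalars) the $p_i$ of Lemma \ref{intptslines2}. For any $a \in K$, the element $a \cdot p = af/l \in K[\x] = A_K$ has trace $\tr_{K/\Q}(af/l) \in \Q[\x]$, a form of degree $d-1$. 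The implication (ii)$\To$(i) should then be fairly direct: given $a_1,\dots,a_r \in K$ with $\sum a_\nu^2 = 0$, consider the single element $t := \sum_\nu a_\nu^2 \otimes (p^2) $ — more precisely, one computes $\sum_\nu \bigl(\tr_{K/\Q}(a_\nu p)\bigr)^2$ and shows it is divisible by $f$ by checking it vanishes on all the $M_{ij}$, using that each $\tr_{K/\Q}(a_\nu p) = \sum_\sigma {}^\sigma(a_\nu) \cdot {}^\sigma p$ and each ${}^\sigma p$ vanishes on $M_{ij}$ unless $\{i,j\}$ is the pair cut out by ${}^\sigma l$; one must track the one surviving term and see why the constraint $\sum a_\nu^2 = 0$ forces divisibility by the \emph{full} quadratic $l_i \bar l_i$ (equivalently by $f$ after summing over conjugates). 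That $g$ is a sum of squares over $\Q$ is immediate since it is visibly $\sum_\nu(\text{rational form})^2$.

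For the converse — which includes both (i)$\To$(ii) and the final "moreover" clause — I would argue as follows. Suppose $g = \sum_{\nu=1}^r g_\nu^2$ with $g_\nu \in \Q[\x]$ of degree $d-1$, and $f \mid g$. Arguing as in the proof of Theorem \ref{denomh17}: each $g_\nu$ vanishes on the $d/2$ real intersections $M_{ij}$, hence by Galois invariance and the transitivity (or condition $(*)$ of \ref{2transweakened}) on all $\binom{d}{2}$ intersections $M_{ij}$, so by Lemma \ref{intptslines2} we have $g_\nu \in (p_1,\dots,p_d)$ with $\deg g_\nu = d-1 = \deg p_i$; thus $g_\nu = \sum_i c_{i\nu} p_i$ for scalars $c_{i\nu} \in \Q$. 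Now $p_i = {}^{\sigma_i}(f/l) = {}^{\sigma_i}p$ for a coset representative $\sigma_i$, and I want to recognize $g_\nu = \tr_{K/\Q}(a_\nu p)$ for a suitable $a_\nu \in K$: this amounts to showing that the $\Q$-linear span of $\{p_1,\dots,p_d\}$ in $\Q[\x]$, intersected with the $\Q$-rational forms, is precisely $\{\tr_{K/\Q}(ap) : a \in K\}$, and that $g_\nu \in \Q[\x]$ forces the coefficient vector $(c_{i\nu})$ to be "Galois-coherent," i.e. to come from a single $a_\nu \in K$. The linear independence of the $p_i$ (a Vandermonde/general-position fact) makes the map $K \to \Q\text{-span}$, $a \mapsto \tr_{K/\Q}(ap)$, injective, and a dimension count identifies the image with the rational forms in the span.

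It then remains to extract the relation $\sum_\nu a_\nu^2 = 0$. Apply the trace-identity machinery: from $g = \sum_\nu \bigl(\tr_{K/\Q}(a_\nu p)\bigr)^2$ and the explicit form of $\tr_{K/\Q}(a_\nu p) = \sum_{\sigma H} {}^\sigma(a_\nu p)$, expand and collect the coefficient of the "diagonal" term supported on a fixed non-real $M_{ij}$ — equivalently, work modulo the ideal generated by all $l_k$ with $k \ne i$, which kills every ${}^\sigma p$ except ${}^{\sigma_i}p$ and ${}^{\sigma_i \tau}p$. In that quotient $g \equiv 0$ (since $f \equiv 0$ there too, as $f$ is divisible by $l_i \bar l_i$ and... wait, more carefully: $g$ divisible by $f$ which vanishes there), and the surviving part of $\sum_\nu(\tr(a_\nu p))^2$ is a nonzero multiple of $\bigl(\sum_\nu {}^{\sigma_i}(a_\nu)^2\bigr) \cdot ({}^{\sigma_i}p)^2$ modulo that ideal, after using the general-position independence to see the cross terms and the ${}^{\sigma_i\tau}p$ terms contribute in a controlled way; forcing this to vanish gives $\sum_\nu {}^{\sigma_i}(a_\nu)^2 = 0$, hence $\sum_\nu a_\nu^2 = 0$ since $\sigma_i$ is an embedding. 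The main obstacle I anticipate is precisely this last bookkeeping step: correctly identifying, in the polynomial ring, which monomials/quotient rings isolate the quadratic relation among the $a_\nu$, and ruling out that cancellations among different $M_{ij}$ could hold $\sum a_\nu^2 \ne 0$ while still making $g$ divisible by $f$. Handling the $2$-transitive orbit structure (so that one pair $M_{ij}$ genuinely "detects" all of $\sum a_\nu^2$) is where the hypothesis on $G$ does its real work, and I would expect to phrase it via the local ring at a generic point of $M_{ij}$, where everything becomes a clean computation with a regular sequence $(l_i, \bar l_i)$.
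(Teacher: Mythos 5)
Your reduction of (i) to (ii) follows the paper's route and is essentially sound: vanishing of each $g_\nu$ on the real intersections, Galois invariance plus transitivity to get vanishing on all $M_{ij}$, Lemma \ref{intptslines2} with the degree count $\deg g_\nu=d-1$, and then Galois-coherence of the coefficients (note the $c_{i\nu}$ lie a priori in $\ol\Q$, not $\Q$; rationality of $g_\nu$ is what forces $c_{i\nu}=\sigma_i(a_\nu)$ for a single $a_\nu\in K$, exactly as you say). The genuine gap is in the step you yourself flag as the main obstacle: characterizing when $g=\sum_\nu\bigl(\tr_{K/\Q}(a_\nu f/l)\bigr)^2$ is divisible by $f$. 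Both devices you propose fail. First, divisibility by $f$ cannot be checked by vanishing on the $M_{ij}$: the vanishing ideal of $\bigcup_{i<j}M_{ij}$ is generated by the degree-$(d-1)$ forms $p_i=f/l_i$, none of which is divisible by $f$, so membership in that ideal is much weaker than divisibility by $f$; moreover every $p_k$ vanishes on every $M_{ij}$ (it contains the factor $l_i$ or $l_j$), so the claim that ${}^\sigma p$ survives only on ``the pair cut out by ${}^\sigma l$'' is not correct. Second, reducing modulo the ideal generated by all $l_k$ with $k\ne i$ kills \emph{every} ${}^\sigma p$, including ${}^{\sigma_i}p$: as soon as $d\ge3$, each $p_j$ (also $p_i$ itself) contains some factor $l_k$ with $k\ne i$, so nothing survives in that quotient and no relation among the $a_\nu$ can be extracted; the local-ring analysis at $M_{ij}$ with the regular sequence $(l_i,\ol l_i)$ is likewise not needed and does not isolate $\sum_\nu a_\nu^2$.

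The correct and much simpler move, which is the paper's, is to reduce modulo the single linear form $l=l_1$. Modulo $l_1$ every $p_k$ with $k\ne1$ dies (it contains the factor $l_1$), so
\begin{equation*}
g\;\equiv\;\Bigl(\sum_{\nu}a_\nu^2\Bigr)\Bigl(\frac f{l_1}\Bigr)^2 \pmod{l_1},
\end{equation*}
and since $\ol\Q[\x]/(l_1)$ is a domain and $l_1\nmid f/l_1$ (the $l_i$ are pairwise non-proportional), $l_1\mid g$ if and only if $\sum_\nu a_\nu^2=0$. Because $g$ has rational coefficients, divisibility by $l_1$ is equivalent, via the Galois action, to divisibility by each $l_i$, hence by $f$; this also gives the implication (ii)$\To$(i) at once, with no $2$-transitivity or intersection-theoretic bookkeeping needed at this stage (the transitivity hypothesis is used only in the earlier step placing the $g_\nu$ in the ideal $I$). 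As written, your argument for both directions of the divisibility criterion does not go through and needs to be replaced by this reduction mod $l$.
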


\begin{proof}
Assume $g=g_1^2+\cdots+g_r^2$ where $g_1,\dots,g_r\in\Q[\x]$ are
forms of degree $d-1$, and assume that $g$ is divisible by $f$. As
before, let $I\subset\ol\Q[\x]$ be the vanishing ideal of $\bigcup
_{i<j}L_i\cap L_j$. By Lemma \ref{intptslines2} we have $g_1,\dots,
g_r\in I\cap\Q[\x]$.

Let $\tr=\tr_{K/\Q}$ be the trace of $K$ over $\Q$. We claim that a
form $g\in I$ of degree $d-1$ has $\Q$-coefficients if and only if
$g=\tr(af/l)$ for some $a\in K$. Indeed, let
$$g\>=\>b_1\frac f{l_1}+\cdots+b_d\frac f{l_d}$$
with $b_i\in\ol\Q$. Let us label the elements of $G/H$ as $\sigma_1H,
\dots,\sigma_dH$ in such a way that $\sigma_1=1$ and $l_i=
{}^{\sigma_i}l$ for $i=1,\dots,d$. The forms $\frac f{l_1},\dots,
\frac f{l_d}$ are linearly independent.
We conclude that $g$ lies in $\Q[\x]$ if and only if $b_1\in K$ and
$b_i=\sigma_i(b_1)$ for $i=1,\dots,d$, or in other words, if and only
if $g=\tr(b_1f/l)$ with $b_1\in K$.

It remains to characterize when a sum of squares
$$g\>=\>\sum_{\nu=1}^r\Bigl(\tr\bigl(\frac{a_\nu f}l\bigr)\Bigr)^2$$
with $a_1,\dots,a_r\in K$ is divisible by $f=l_1\cdots l_d$, or
equivalently, by~$l=l_1$. Since
$$g\>=\>\sum_{\nu=1}^r\Bigl(\sigma_1(a_\nu)\frac f{l_1}+\cdots+
\sigma_d(a_\nu)\frac f{l_d}\Bigr)^2,$$
we see that $g$ is divisible by $l_1$ if and only if $\sum_\nu
a_\nu^2=0$.

The proof of Proposition \ref{beautiful}, and therefore of Theorem
\ref{denomh17}, is complete.
\end{proof}

\begin{rem}
In (a) of Theorem \ref{denomh17}, we can always find $0\ne h\in
\Q[\x]$ of degree $d-2$ such that $fh$ is a sum of five squares in
$\Q[\x]$. This follows from \ref{beautiful} since $-1$ is a sum of
four squares in $K$. If $K$ happens to have level~$2$, i.e., if $-1$
is a sum of two squares in $K$, then $fh$ can be made a sum of three
squares in $\Q[\x]$. (Note that $-1$ cannot be a square in $K$, and so
$fh$ cannot be made a sum of two squares in $\Q[\x]$.)
\end{rem}

\begin{example}
To illustrate the preceding construction, let us review the example
of a ternary quartic $f\in\Q[x_0,x_1,x_2]$ given after Theorem
\ref{stq} (c.f.\ also Example \ref{bspprec}). In this case, the
number field $K=\Q(\alpha)$ with $\alpha^4-\alpha+1=0$ has level~$2$,
as one can conclude from general reasons since the prime~$2$ is inert
in~$K$.
Explicitly, this is confirmed by the identity
$$(\alpha^2+\alpha-1)^2+(\alpha^2-\alpha)^2+1\>=\>0$$
in $K$. Writing
$$g_1\>=\>\tr\Bigl(\frac fl\Bigr),\quad g_2\>=\>
\tr\Bigl(\frac{(\alpha^2+\alpha-1)f}l\Bigr),\quad g_3\>=\>
\tr\Bigl(\frac{(\alpha^2-\alpha)f}l\Bigr)$$
with $\beta=\alpha^2+\alpha-1$ and $\gamma=\alpha^2-\alpha$, we find
\begin{fitbigdiagram}
\begin{align*}
g_1= & \ 4x_0^3+x_1^3+x_2^3+4x_0x_2^2-4x_1^2x_2-6x_0x_1x_2, \\
g_2= & \ -4x_0^3+3x_1^3+4x_2^3-3x_0^2x_1-x_0x_1^2+x_0^2x_2-x_0x_2^2+
  4x_1^2x_2+3x_1x_2^2-2x_0x_1x_2, \\
g_3= & \ -4x_1^3+3x_2^3-3x_0^2x_1-7x_0x_1^2+7x_0^2x_2+3x_0x_2^2+
  3x_1x_2^2+8x_0x_1x_2.
\end{align*}
\end{fitbigdiagram}%
Expanding the sum of squares, we obtain
$$fh\>=\>g_1^2+g_2^2+g_3^2$$
where
$$h\>=\>32\,x_0^2+24\,x_0x_1-8\,x_0x_2+26\,x_1^2+16\,x_1x_2+26\,x_2^2.$$
To write $h$ as a sum of squares in an explicit way, we may observe
$$86\,h\>=\>43\,(8x_0+3x_1-x_2)^2+(43x_1+19x_2)^2+1832\,x_2^2.$$
\end{example}


\section{Ternary quartics}\Label{sect:tq}

In this section we restrict to ternary forms of degree four. It was
proved by Hilbert in 1888 \cite{h} that every nonnegative quartic
form $f\in\R[x_0,x_1,x_2]$ is a sum of squares of quadratic forms
(and, in fact, of three squares). In Theorem \ref{stq} we constructed
a family of quartic forms $f\in\Q[x_0,x_1,x_2]$ that are sums of
squares over $\R$, but not over $\Q$. Now we'll show that conversely
every nonnegative ternary quartic $f\in\Q[x_0,x_1,x_2]$ that fails to
be a sum of squares over $\Q$ arises from the construction in
\ref{stq}. More precisely, we'll prove:

\begin{thm}\Label{terquaronly}%
Let $f\in\Q[x_0,x_1,x_2]$ be a nonnegative form of degree~$4$ which
is not a sum of squares over $\Q$. Then $f$ is a product $f=l_1l_2
l_3l_4$ of linear forms in $\C[x_0,x_1,x_2]$, the four lines $l_i=0$
are in general position, and $\Gal(\ol\Q/\Q)$ acts on the set of
these lines
as the symmetric or alternating group on four letters.
\end{thm}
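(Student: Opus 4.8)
The plan is to start from the known structure theory of nonnegative ternary quartics that fail to be sums of squares over $\Q$, and to pin down the Galois action. First I would recall that over $\R$, by Hilbert's theorem $f=q_1^2+q_2^2+q_3^2$ with quadratic forms $q_i$; the real sum-of-squares representations of a fixed $f$ form a convex body whose extreme points are the representations as sums of \emph{two} squares (length-two representations), and there are generically exactly fifteen of them (up to sign), coming from the fifteen ``even theta characteristics'' / the $2$-torsion structure on the plane quartic curve $f=0$. The obstruction to descending a representation to $\Q$ is a Galois-cohomological one: an individual length-two representation $f = A^2 + B^2 = (A+iB)(A-iB)$ is defined over $\Q(i)$-points of the configuration, and descends to $\Q$ precisely when the relevant point is $\Q$-rational. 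So the hypothesis that $f$ is \emph{not} a sum of squares over $\Q$ forces that none of the distinguished length-two representations (and no rational convex combination giving a shorter representation) is $\Q$-rational, which is a strong constraint on how $\Gal(\ol\Q/\Q)$ permutes the fifteen representations.

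Next I would connect the length-two representations to the factorization of $f$. A length-two representation $f = GH$ with $G = A+iB$, $H = A-iB = \ol G$ (complex conjugates) exists over $\R$; if moreover $f$ splits over $\C$ into four linear forms $f = l_1l_2l_3l_4$ with the four lines in general position, then the partitions of $\{1,2,3,4\}$ into two pairs give three ``obvious'' factorizations $f = (l_il_j)(l_kl_l)$, and these account for three of the fifteen representations. The key dichotomy is whether the quartic curve $\{f=0\}$ is smooth or singular. I would argue that if $\{f=0\}$ is a smooth quartic, then one can always find a length-two representation defined over $\Q$ (using that the Steiner system of bitangents / the $2$-torsion has enough rational structure for a nonnegative form — this is where I would invoke rationality of at least one even theta characteristic, e.g. via a Brauer-class or real-points argument), contradicting the hypothesis. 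Hence $\{f=0\}$ must be singular, and combining singularity with nonnegativity and the failure over $\Q$ forces $f$ to be a product of four linear forms over $\C$ in general position (the other singular cases — a double conic, a conic times two lines, repeated lines, lines in special position — either make $f$ a sum of squares over $\Q$ already or contradict nonnegativity/irreducibility).

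Once $f = l_1l_2l_3l_4$ with the four lines in general position is established, the remaining task is purely group-theoretic. Since $f$ has rational coefficients, $\Gal(\ol\Q/\Q)$ acts on the four lines, giving a homomorphism to $S_4$ with transitive image (else $f$ would factor over a proper subfield into forms whose norm structure makes $f$ a sum of squares over $\Q$, contradicting the hypothesis — I would rule out the imprimitive and intransitive subgroups of $S_4$ one by one using Theorem~\ref{thmprec} and condition $(*)$ from Remark~\ref{2transweakened}, together with the fact that nonnegativity forces complex conjugation to act as a fixpoint-free involution, i.e. a product of two transpositions in $S_4$). The subgroups of $S_4$ containing such an element and acting transitively are: $S_4$, $A_4$, $D_4$, $V_4$ (the normal Klein four-group), and the cyclic group $C_4$; I would eliminate $D_4$, $V_4$, $C_4$ by exhibiting in each case a two-pair partition that is rational (hence a rational factorization $f=(l_il_j)(l_kl_l) = N_{\Q(\sqrt d)/\Q}(\cdot)$ of a real conic into... ) producing a sum-of-squares representation over $\Q$, or more directly by checking that condition $(*)$ fails so that the obstruction vanishes. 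What survives is exactly $A_4$ and $S_4$.

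The main obstacle I expect is the smooth-quartic step: proving that a nonnegative \emph{smooth} ternary quartic over $\Q$ always has a sum-of-squares representation over $\Q$. This requires understanding the $\Q$-rationality of the fifteen representations, which is governed by the arithmetic of the bitangents / the Jacobian's $2$-torsion and a real-place condition; the cleanest route is probably to show that the set of length-two representations carries an extra structure (a torsor under the $2$-torsion of the Jacobian, or an associated Brauer class) which is trivial because $f$ is nonnegative and hence the real points behave well, forcing a rational representation. Handling this carefully — rather than the elementary group theory or the singular-curve classification — is where the real work lies.
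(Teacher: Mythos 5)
There is a genuine gap, and it sits exactly where the paper's real work is. Your reduction rests on two claims that are not established (and one of them is based on a false premise). First, the ``smooth case'': if the quartic curve $f=0$ is nonsingular and $f$ is nonnegative, then $f$ has no nontrivial real zero at all (a real zero of a nonnegative form is a critical point, hence a singular point of the curve), so this is the \emph{positive definite} case. Your plan to produce a rational \emph{length-two} representation there cannot work as stated: $f=A^2+B^2$ forces $f=(A+iB)(A-iB)$ to split over $\C$ into two conjugate quadratics, so a quartic that is irreducible over $\C$ (the generic smooth situation) admits no sum-of-two-squares representation over any field; the extremal Gram matrices generically have rank three, and the relevant counts are the $63$ complex/$8$ real three-square representations, not fifteen length-two ones. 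Moreover no theta-characteristic or Brauer-class argument is needed here: in the definite case the characteristic subspace is all of $\R[\x]_2$, and Lemma \ref{ufdefqsosq} (convexity of the Gram spectrahedron, existence of a full-rank Gram matrix, density of $\Q$ in the rationally defined affine slice) already yields a rational sum of squares. Second, and more seriously, your parenthetical claim that ``singularity plus nonnegativity plus failure over $\Q$ forces $f$ to be a product of four lines'' is exactly the hard part and is not argued: there are absolutely irreducible \emph{singular} nonnegative quartics over $\Q$ (isolated real zeros of types $A_1^*$, $A_3^*$, $A_5^*$, possibly together with pairs of complex conjugate singularities), and to get your dichotomy you must prove that all of them are sums of squares over $\Q$. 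The paper does this via the characteristic subspace $U_f$: the local computations of Proposition \ref{aklocbd} and Remark \ref{remx9} identify $U_{f,\xi}$ at each isolated real zero, Lemma \ref{lgpcharsubsp} and Corollary \ref{bigcapufxidefq} show $U_f$ is defined over $\Q$ whenever the singular configuration is Galois-stable and real, and the one configuration where this fails ($A_1^*+2A_1^i$: one real node and two conjugate nonreal nodes) is handled by the uniqueness, from \cite{sch:jag}, of the three-square representation over $\C$ vanishing at all three nodes, whose Gram tensor is then Galois-invariant and positive semidefinite, hence rational. Nothing in your sketch substitutes for this analysis.

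Your endgame, once $f=l_1l_2l_3l_4$ with the lines in general position is granted, is essentially right and close to the paper's: complex conjugation acts without fixed points, and the transitive subgroups other than $A_4$ and $S_4$ (namely $C_4$, $V_4$, $D_4$) are eliminated because they correspond to the quartic field having a quadratic subfield, so $f$ is an iterated norm $N_{L/\Q}(N_{K/L}(l))$ with $[L:\Q]=2$; one then checks that the conjugate quadratic factors are nonnegative and invokes the imaginary case directly or Hillar's totally real descent \cite{hi} in the real quadratic case. Note, though, that your alternative of ``checking that condition $(*)$ fails'' does not suffice: Remark \ref{2transweakened} gives a \emph{sufficient} condition for the obstruction, and its failure does not by itself show the obstruction vanishes — you still need the norm/Hillar argument (or an equivalent) to conclude that $f$ is a sum of squares over $\Q$ in those cases. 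But these are repairable details; the missing irreducible-case analysis is the substantive gap.
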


Before starting the proof, we need to introduce an important general
concept. In the sequel let $\x=(x_0,\dots,x_n)$ with arbitrary
$n\ge1$, and denote by $\Sigma$ the cone of sums of squares in
$\R[\x]$.

\begin{dfn}\Label{dfncharsubsp}%
Given a sum of squares $f\in\Sigma$, the set
$$U_f\>:=\>\{p\in\R[\x]\colon f-\epsilon p^2\in\Sigma\text{ for
some }\epsilon>0\}$$
will be called the \emph{characteristic subspace} for $f$.
\end{dfn}

\begin{lem}\Label{ufprops}%
Let $f\in\Sigma$.
\begin{itemize}
\item[(a)]
The set $\{p\in\R[\x]\colon f-p^2\in\Sigma\}$ is convex. Hence $U_f$
is a linear subspace of $\R[\x]$.
\item[(b)]
There is a sum of squares representation $f=p_1^2+\cdots+p_r^2$ of
$f$ in which $p_1,\dots,p_r$ is a linear basis of $U_f$.
\end{itemize}
\end{lem}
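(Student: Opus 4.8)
\textbf{Proof plan for Lemma~\ref{ufprops}.}

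The plan is to prove (a) first and deduce the structural facts from the convexity, then build the single ``fat'' representation in (b) by an extreme-point argument. For (a), I would start with the set $V:=\{p\in\R[\x]\colon f-p^2\in\Sigma\}$. Convexity of $V$ should follow from the elementary identity $f-\bigl(\lambda p+(1-\lambda)q\bigr)^2 = \lambda(f-p^2)+(1-\lambda)(f-q^2)+\lambda(1-\lambda)(p-q)^2$, valid for $0\le\lambda\le1$: if $p,q\in V$ then all three terms on the right are in $\Sigma$, hence so is the left side. Note $0\in V$ and $V=-V$, so $V$ is a symmetric convex set containing the origin. To get that $U_f=\bigcup_{\epsilon>0}\epsilon^{-1/2}\!\cdot\!\{p: f-\epsilon p^2\in\Sigma\}$ is a \emph{linear} subspace, observe first that $p\in U_f$ iff $tp\in V$ for some $t>0$ (rescale), i.e.\ $U_f=\{p : tp\in V\text{ for some }t>0\}=\bigcup_{t>0}t^{-1}V$ is the cone generated by $V$; since $V$ is convex and symmetric, this cone is closed under addition (given $p,q\in U_f$ pick a common $t$ small enough that $tp,tq\in V$, then $\tfrac{t}{2}(p+q)\in V$ by convexity, so $p+q\in U_f$) and under scalar multiplication, hence is a linear subspace.

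For part (b), the natural approach is a dimension/extreme-point induction on $\dim U_f$, or equivalently on the rank of a suitable Gram matrix. Fix any representation $f=q_1^2+\cdots+q_s^2$ and let $W=\operatorname{span}(q_1,\dots,q_s)\subset\R[\x]$; clearly $W\subseteq U_f$, and I claim one can modify the representation until $W=U_f$. Suppose $W\subsetneq U_f$, and pick $p\in U_f\setminus W$, so $f-\epsilon p^2\in\Sigma$ for some $\epsilon>0$; write $f-\epsilon p^2=\sum_j r_j^2$. Then $f=(\sqrt\epsilon\,p)^2+\sum_j r_j^2$ is a new representation whose span strictly contains $W$ (it contains $p\notin W$), so its span has strictly larger dimension. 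Iterating, since $\dim U_f<\infty$ (it sits inside the finite-dimensional space of forms of degree $\tfrac12\deg f$), we reach a representation $f=\sum_\nu p_\nu^2$ whose span equals all of $U_f$. Finally, discard linearly dependent $p_\nu$: if $p_\nu=\sum_{\mu\ne\nu}c_\mu p_\mu$, the representation is not of minimal length, so among all representations with span $U_f$ take one of \emph{minimal length}; I then argue this minimal-length representation must have linearly independent terms, for otherwise one could use an orthogonal change of coordinates on the vector $(p_1,\dots,p_r)$ (the sum of squares $\sum p_\nu^2$ is the squared norm, invariant under $O(r)$) to rotate one coordinate to zero while keeping the span — dropping it contradicts minimality. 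This yields a basis of $U_f$ among the $p_\nu$.

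I expect the main obstacle to be the very last point — showing a shortest representation with the correct span consists of linearly independent forms. The $O(r)$-invariance trick needs care: if $p_1,\dots,p_r$ span a space $U$ of dimension $k<r$, one wants an orthogonal matrix $A\in O(r)$ such that the vector $(p'_1,\dots,p'_r):=A\cdot(p_1,\dots,p_r)^{t}$ (entries are $\R$-linear combinations of the $p_i$, hence still forms spanning $U$) has $p'_r=0$ while $\sum_\nu (p'_\nu)^2=\sum_\nu p_\nu^2=f$. Such an $A$ exists because the $r\times(\text{something})$ coefficient matrix of $(p_1,\dots,p_r)$ against a basis of $U$ has rank $k<r$, so its columns lie in a proper subspace of $\R^r$ and a suitable rotation kills the last coordinate of every column; this contradicts minimality of $r$. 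Alternatively, and perhaps more cleanly, one avoids the rotation entirely: take a representation with span $U_f$, pick a basis $p_1,\dots,p_k$ of $U_f$ out of the $p_\nu$, express every remaining $p_\nu$ in this basis, and collect $f=\sum_\nu p_\nu^2$ as a quadratic form $\sum_{i,j}G_{ij}p_ip_j$ in $p_1,\dots,p_k$ with $G\succeq0$; diagonalizing $G$ by an orthogonal matrix rewrites $f$ as a sum of at most $k$ squares of $\R$-linear combinations of $p_1,\dots,p_k$, all nonzero (else the rank of $G$ drops and $U_f$ is not spanned), giving exactly the desired basis representation. I would present this diagonalization version, as it is the most transparent.
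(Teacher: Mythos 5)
Part (a) of your proposal is correct and is exactly the paper's argument (the paper uses the same identity and leaves the cone-to-subspace step implicit, which you spell out correctly). The endgame of (b) — expressing $f=\sum_{i,j}G_{ij}p_ip_j$ over a basis extracted from a spanning representation and diagonalizing the positive semidefinite Gram matrix — is also the paper's final step (diagonalizing the tensor $\sum_i g_i\otimes g_i$).

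The genuine gap is in how you manufacture a representation whose span is all of $U_f$. You take a representation $f=q_1^2+\cdots+q_s^2$ with span $W\subsetneq U_f$, pick $p\in U_f\setminus W$, write $f-\epsilon p^2=\sum_j r_j^2$, and claim the new representation $f=(\sqrt\epsilon\,p)^2+\sum_j r_j^2$ has span strictly containing $W$. That does not follow: the old squares $q_i$ have been discarded, and the $r_j$ have no relation to them, so the new span contains $p$ but need not contain $W$ — its dimension can even drop. Hence the asserted strict increase of dimension is unjustified and the iteration need not terminate in a representation spanning $U_f$. The repair is cheap and is essentially what the paper does: do not replace, but combine. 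Either write $f=\tfrac12\sum_i q_i^2+\tfrac\epsilon2 p^2+\tfrac12\sum_j r_j^2$, so the span now contains $W$ and $p$ and the dimension genuinely increases; or, more directly, choose a basis $q_1,\dots,q_r$ of $U_f$ scaled so that $f-q_i^2\in\Sigma$ for each $i$, and average the $r$ representations $f=q_i^2+(f-q_i^2)$ to get a single representation $f=\sum_k g_k^2$ whose span contains every $q_i$, hence equals $U_f$ (it is contained in $U_f$ since any form occurring in a sum-of-squares representation of $f$ lies in $U_f$). With that in hand your diagonalization paragraph goes through; note only that the correct reason all eigenvalues of $G$ are positive is that $G\succeq I_k$ (the chosen basis elements occur in the representation with coefficient $1$), not the parenthetical "else $U_f$ is not spanned" — a priori a shorter sum of squares of elements of $U_f$ would not by itself contradict anything.
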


\begin{proof}
If $f-p_j^2\in\Sigma$ for $j=1,2$, then
$$f-((1-t)p_1+tp_2)^2\>=\>(1-t)(f-p_1^2)+t(f-p_2^2)+t(1-t)(p_1-p_2)
^2\>\in\>\Sigma$$
for $0\le t\le1$, proving (a).
As for (b), there is a basis $q_1,\dots,q_r$ of $U_f$ such that
$f-q_i^2\in\Sigma$ for $i=1,\dots,r$. By averaging over corresponding
sums of squares expressions we find a sum of squares representation
$f=g_1^2+\cdots+g_k^2$ in which $g_1,\dots,g_k$ span $U_f$. Now
diagonalizing the symmetric tensor $\sum_{i=1}^kg_i\otimes g_i\in U_f
\otimes U_f$ gives the assertion.
\end{proof}

The reason why the characteristic subspaces will be useful here is
the following lemma (which generalizes \cite{hi} Theorem 1.2):

\begin{lem}\Label{ufdefqsosq}%
Let $f\in\Q[\x]\cap\Sigma$, i.e., $f$ is a polynomial with rational
coefficients and is a sum of squares in $\R[\x]$. If the subspace
$U_f$ of $\R[\x]$ is defined over $\Q$, then $f$ is a sum of squares
in $\Q[\x]$.
\end{lem}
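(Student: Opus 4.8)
The plan is to deduce rationality of a sum of squares representation from rationality of the linear span $U_f$ by a trace-averaging argument, exactly in the spirit of Section~\ref{sect:totreal}. First I would pick a number field $K$ over which $f$ admits a sum of squares representation; such a $K$ exists because the set of sum of squares representations of a given $f$ (the Gram spectrahedron) is a rational polyhedron-like object cut out over $\Q$, so it has a point over a number field, and one may choose $K$ to be \emph{totally real} by a genericity/perturbation argument staying inside the spectrahedron. The point of the hypothesis that $U_f$ is defined over $\Q$ is that it gives us a $\Q$-structure on the ambient space in which the Gram tensor lives: by Lemma~\ref{ufprops}(b) there is a representation $f=\sum_i p_i^2$ with the $p_i$ ranging over a $\Q$-basis of $U_f$, so the associated Gram matrix is a positive semidefinite symmetric matrix with entries in the field generated by the coefficients of the representation.

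The key steps, in order, are: (1)~using Lemma~\ref{ufprops}(b), fix a basis $q_1,\dots,q_r$ of $U_f$ consisting of \emph{rational} forms (possible since $U_f$ is defined over $\Q$), and write any real sum of squares representation of $f$ whose summands lie in $U_f$ as $f=\sum_{k}(\sum_j c_{kj}q_j)^2$, equivalently as a positive semidefinite Gram matrix $C=(C_{jj'})$ with $C=\sum_k c_k c_k^{\mathsf t}$ and $\sum_{j,j'} C_{jj'}\,q_j q_{j'}=f$; (2)~observe that since $f$ and the products $q_j q_{j'}$ all have rational coefficients, the linear conditions $\sum_{j,j'}C_{jj'}q_jq_{j'}=f$ on the symmetric matrix $C$ form a $\Q$-affine-linear system, so its solution set is a $\Q$-affine subspace $W$ of the space of symmetric $r\times r$ matrices; (3)~the set of \emph{positive semidefinite} members of $W$ is then a spectrahedron defined over $\Q$, and it is nonempty (it contains the Gram matrix of the chosen real representation). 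Now pick any $C$ in this spectrahedron with entries in a totally real number field $K$ — for instance a rational interior point if the spectrahedron has nonempty interior, but in general one argues that a face of it defined over $\Q$ has a relative interior point over a totally real field, using that the minimal face of a $\Q$-spectrahedron through a given point is itself defined over $\Q$. Finally apply Theorem~\ref{improvbound} (or just the trace averaging of its proof) with $A=\Q[\x]$: writing $C$ over $K$ as a sum of rank-one PSD matrices, $\tfrac1{[K:\Q]}\tr_{K/\Q}$ applied entrywise sends $C$ to a rational PSD matrix still lying in $W$ (the trace of a PSD matrix over a totally real field, with respect to an extension all of whose orderings are unramified, is again PSD — this is precisely formula~\eqref{trkp} applied slotwise to the Gram tensor). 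A rational PSD matrix in $W$ is exactly a sum of squares representation of $f$ over $\Q$.

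The main obstacle I anticipate is step~(3): producing a sum of squares representation of $f$ supported on $U_f$ whose Gram matrix lies in a number field that is \emph{totally real}. Over $\R$ the space of such Gram matrices is a nonempty spectrahedron defined over $\Q$, but its rational points could conceivably all be non-PSD if the spectrahedron has empty interior; one must then descend to the smallest face containing a given real point, show that face is still cut out over $\Q$ (standard: it is the intersection with the kernel-containment conditions, which are $\Q$-linear once we know a rational point in the relative interior — a mild circularity to be broken by an induction on dimension), and find a relative interior point over a totally real field by choosing eigenvalue data appropriately. An alternative that sidesteps totally-real considerations entirely: since $U_f$ is defined over $\Q$, the restriction of the quadratic form "$p\mapsto$ content of $f-p^2$" to $U_f$ is defined over $\Q$, and one can run the diagonalization of Lemma~\ref{ufprops}(b) \emph{over $\Q$} directly, because the relevant symmetric tensor $\sum g_i\otimes g_i$ can be taken to have rational entries once the representation is chosen to be $\Gal(\ol\Q/\Q)$-invariant — averaging the Galois orbit of a real representation supported on $U_f$ over the (finite) Galois orbit stays inside the convex set from Lemma~\ref{ufprops}(a) and lands at a $\Q$-rational Gram matrix. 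This Galois-averaging route is the cleanest and is what I would write up: it uses only Lemma~\ref{ufprops} and the fact that $U_f$, being $\Q$-defined, is $\Gal(\ol\Q/\Q)$-stable, so the barycenter of a Galois orbit of representations-supported-on-$U_f$ is both a sum of squares representation of $f$ and Galois-invariant, hence rational.
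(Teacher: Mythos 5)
There is a genuine gap, and it is the same one in both of your routes: you never use the hypothesis on $U_f$ in the way that actually carries the proof. Your preferred Galois-averaging route is unsound. If a representation of $f$ supported on $U_f$ has Gram matrix $C$ with entries in a number field $K\subset\R$, the Galois conjugates $\sigma(C)$ need not be positive semidefinite (nor even real) unless $K$ is totally real; positive semidefiniteness of $C$ says certain elements of $K$ are nonnegative \emph{in the chosen embedding}, and this is not preserved by conjugation. So the ``Galois orbit'' does not stay inside the convex set of Lemma \ref{ufprops}(a), and its barycenter need not be a Gram matrix of a sum of squares. Worse, that argument uses the $\Q$-definedness of $U_f$ only to make the ambient affine space of Gram matrices rational --- which is equally true of the full Gram spectrahedron of \emph{any} $f\in\Q[\x]\cap\Sigma$ --- so if it worked it would prove that every such $f$ is a sum of squares over $\Q$, contradicting Theorem \ref{stq}. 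The fallback in your first route fails for the same reason: the claim that a nonempty $\Q$-defined spectrahedron (or a suitable $\Q$-defined face of it) always has a point over a totally real field, combined with Theorem \ref{improvbound}, would again contradict Theorem \ref{stq}; for the forms of Section \ref{sect:stq} the Gram spectrahedron is a nonempty $\Q$-defined spectrahedron with no point over any totally real field (compare Example \ref{bspprec}, where the extremal representations live over $\Q(\sqrt{-\beta})$, which is not totally real). Also note that Lemma \ref{ufprops}(b) does not give a representation whose summands form a \emph{$\Q$-basis} of $U_f$, as you state; it gives a real basis, and if the summands were rational there would be nothing to prove.

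The missing idea --- and the paper's actual proof --- is that Lemma \ref{ufprops}(b) gives strictly more than a PSD Gram matrix: since the summands $p_1,\dots,p_r$ there are a \emph{basis} of $U_f$, the Gram matrix of that representation with respect to any basis of $U_f$ is positive \emph{definite}. Consequently the affine subspace $\Gamma_f=\gamma^{-1}(f)\subset S^2U_f$ (where $\gamma\colon S^2U_f\to\R[\x]$ is the product map), which is defined over $\Q$ precisely because $U_f$ is, contains a positive definite point. Positive definiteness is an open condition and the rational points of a $\Q$-defined affine subspace are dense in it, so $\Gamma_f$ contains a rational positive definite point; diagonalizing this rational positive definite form over $\Q$ (positive rational coefficients being sums of four rational squares) exhibits $f$ as a sum of squares in $\Q[\x]$. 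No totally real field, trace averaging, or facial reduction is needed, and none of these can be made to work here for the reasons above. You do name the correct mechanism as the ``rational interior point'' subcase, but the crux is exactly to show the interior is nonempty --- which is what \ref{ufprops}(b) delivers and what your write-up omits --- while the machinery you propose for the remaining case is both unnecessary and false in the stated generality.
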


(If $V$ is a $\Q$-vector space, a linear subspace $L$ of $V\otimes_\Q
\R$ is said to be defined over $\Q$ if it is spanned by $L\cap V$.
Similarly for affine-linear subspaces.)

\begin{proof}
Let $U_f\subset\R[\x]$ be the characteristic subspace of $f$, let
$S^2U_f$ be its second symmetric power, and let $\gamma\colon S^2U_f
\to\R[\x]$ be the natural linear (product) map. Since $U_f$ is
defined over $\Q$, so is $\Gamma_f:=\gamma^{-1}(f)$, an affine-linear
subspace of $S^2U_f$. By Lemma \ref{ufprops}(b), $\Gamma_f$ contains
an element of $S^2U_f$ that is positive definite. From density of
$\Q$ in $\R$ we conclude that $\Gamma_f$ also contains a positive
definite element defined over $\Q$. In particular, hence, $f$ is a
sum of squares over $\Q$.
\end{proof}

Hilbert's theorem \cite{h} on ternary quartics allows us to give an
easy geometric descriptions for the characteristic subspaces of
ternary quartics. First, the problem is local:

\begin{lem}\Label{lgpcharsubsp}%
Let $f,\,g\in\R[\x]$ be two nonnegative forms of the same degree.
Assume for every $0\ne\xi\in\R^{n+1}$ that there exists $\epsilon>0$
for which $f-\epsilon g$ is nonnegative in a neighborhood of $\xi$.
Then there exists $\epsilon>0$ such that $f-\epsilon g$ is
nonnegative on $\R^{n+1}$.
\end{lem}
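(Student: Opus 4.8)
The plan is to prove Lemma~\ref{lgpcharsubsp} by a compactness argument on the unit sphere $S^n\subset\R^{n+1}$, combined with a homogeneity (scaling) reduction. First I would observe that since $f$ and $g$ are forms of the same degree, the sign of $f-\epsilon g$ at a point depends only on the ray through that point; hence it suffices to produce a single $\epsilon>0$ making $f-\epsilon g\ge0$ on the compact sphere $S^n$, and homogeneity then propagates this to all of $\R^{n+1}$. So the real content is a local-to-global statement on $S^n$.

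Next I would set up the local data. For each $\xi\in S^n$ the hypothesis gives $\epsilon_\xi>0$ and an open neighborhood $V_\xi$ of $\xi$ in $\R^{n+1}$ on which $f-\epsilon_\xi g\ge0$. The key point is monotonicity in $\epsilon$: wherever $g\ge0$ we have $f-\epsilon g\ge f-\epsilon_\xi g\ge0$ for every $\epsilon\le\epsilon_\xi$, so shrinking $\epsilon$ only helps at points where $g\ge0$. Since $g$ is a nonnegative form (this is in the hypothesis: ``two nonnegative forms''), in fact $g\ge0$ everywhere, so on $V_\xi$ the inequality $f-\epsilon g\ge0$ holds for all $0<\epsilon\le\epsilon_\xi$. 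Now the sets $V_\xi\cap S^n$ form an open cover of the compact set $S^n$; extract a finite subcover $V_{\xi_1},\dots,V_{\xi_N}$ and put $\epsilon:=\min\{\epsilon_{\xi_1},\dots,\epsilon_{\xi_N}\}>0$. Then $f-\epsilon g\ge0$ at every point of $S^n$, and hence, by homogeneity, on all of $\R^{n+1}$.

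The step I expect to be the only real subtlety is the monotonicity observation that lets a \emph{single} $\epsilon$ work simultaneously for all the patches: a priori the local $\epsilon_\xi$ need not be comparable, and one must be sure that replacing $\epsilon_\xi$ by the common minimum $\epsilon$ does not destroy the inequality $f-\epsilon_\xi g\ge0$ on $V_{\xi}$. This is exactly where nonnegativity of $g$ enters --- decreasing the coefficient of a nonnegative quantity can only increase $f-\epsilon g$ --- and it is the one place the hypothesis ``$g$ nonnegative'' (as opposed to just ``$f$ nonnegative'') is used. Everything else is routine: the scaling reduction to $S^n$ and the finite-subcover extraction are standard, and no further input beyond compactness of $S^n$ is needed.
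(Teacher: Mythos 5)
Your proof is correct and is essentially the paper's argument: the paper invokes compactness of $\P^n(\R)$ and takes the minimum of finitely many local $\epsilon_\xi$, which is the same local-to-global scheme as your reduction to the compact sphere $S^n$ followed by a finite subcover. Your explicit monotonicity remark (that shrinking $\epsilon$ preserves the inequality because $g\ge0$) is left implicit in the paper but is exactly the point that makes the minimum work, so nothing is missing or different in substance.
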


\begin{proof}
This follows from compactness of projective space: For any $\xi\in
\P^n(\R)$ there exists $\epsilon_\xi>0$ and a neighborhood $W_\xi
\subset\P^n(\R)$ of $\xi$ such that $f-\epsilon_\xi g$ is nonnegative
on $W_\xi$. Choose finitely many points $\xi_1,\dots,\xi_r\in
\P^n(\R)$ such that $\P^n(\R)=\bigcup_{i=1}^rW_{\xi_i}$, and put
$\epsilon=\min\{\epsilon_{\xi_i}\colon i=1,\dots,r\}$. Then
$f-\epsilon g$ is everywhere nonnegative.
\end{proof}

From now on let $\x=(x_0,x_1,x_2)$.
For a nonnegative ternary quartic $f\in\R[\x]$ with isolated real
zeros, we will determine the characteristic subspace $U_f$
explicitly. (The case where the real zeros are not isolated is even
easier, since it reduces to nonnegative quadratic forms.) By Lemma
\ref{lgpcharsubsp}, it suffices to do this locally, namely to
determine the subspace
$$U_{f,\xi}\>:=\>\{p\in\R[\x]_2\colon\ex\epsilon>0\ f-\epsilon p^2
\ge0\text{ around }\xi\}$$
for every $\xi\in\P^2(\R)$ with $f(\xi)=0$. Note that $U_{f,\xi}$ is
also the space of all $p\in\R[\x]_2$ for which $p^2/f$ is locally
bounded around~$\xi$ in $\P^2(\R)$. (Here $\R[\x]_2$ denotes the
space of quadratic forms in $\R[\x]$.)

Assume that $\xi$ is an isolated real zero of a quartic form
$f\in\R[\x]$. Then $\xi$ is a singularity of the curve $f=0$ of real
type $A_1^*$, $A_3^*$, $A_5^*$, $A_7^*$ or $X_9^{**}$. The last two
can occur only when $f$ is reducible over $\C$. Here, by a (plane)
$A_k^*$-singularity (for $k\ge1$ odd), we mean a real analytic
singularity of type $A_k$ whose two analytic branches are complex
conjugate.
See \ref{remx9} for $X_9^{**}$.

\begin{prop}\Label{aklocbd}%
Let $f(x,y)$, $p(x,y)$ be real analytic function germs in $(\R^2,0)$,
and assume that the singularity $f=0$ is of type $A_{2r-1}^*$ with
$r\ge1$. Then the germ $p^2/f$ is locally bounded in $\R^2$ around
$0$ if and only if $i(f,p)\ge2r$, where $i$ denotes the local
intersection number at $0\in\R^2$.
\end{prop}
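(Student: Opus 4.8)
The plan is to reduce everything to a normal form for an $A_{2r-1}^*$ singularity and then do an explicit local computation. First I would invoke the analytic classification of plane curve singularities: after a real-analytic change of coordinates near $0$, a singularity of type $A_{2r-1}^*$ can be written as $f=u\cdot(y^2+x^{2r})$ for some real-analytic unit germ $u$ with $u(0)>0$ (the sign of the $x^{2r}$-term, and the unit being positive, is forced by the requirement that the two analytic branches $y=\pm i x^r$ be complex conjugate rather than real, i.e. by $f$ being nonnegative / locally definite along the real locus). Since multiplying $f$ by a positive unit changes neither local boundedness of $p^2/f$ nor the intersection number $i(f,p)$, I may as well assume $f=y^2+x^{2r}$. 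Note $i(f,p)$ equals the order of vanishing of $p$ restricted to either branch, i.e. $\mathrm{ord}_t\, p(t^r, i t^r)$ wait—more cleanly, $i(f,p)=\dim_{\R}\R[[x,y]]/(f,p)$, and I would record the standard fact $i(f,p)=i(y-ix^r,p)+i(y+ix^r,p)$ when $f$ is reduced with these two branches.

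Next, the core estimate. On the real locus, $f=y^2+x^{2r}\ge 0$, and near $0$ the quantity $y^2+x^{2r}$ is comparable (up to bounded factors) to $\max(y^2,x^{2r})$, hence to $(\,|y|+|x|^r\,)^2$. So $p^2/f$ is locally bounded around $0$ in $\R^2$ if and only if $|p(x,y)|\le C\,(|y|+|x|^r)$ for $(x,y)$ near $0$, i.e. if and only if $p$ lies in the ideal generated by $y$ and $x^r$ in the ring of germs — equivalently $p\in(y,x^r)$ as formal/analytic power series, since the real-valued pointwise bound forces the Taylor coefficients of the low-order monomials $1,x,\dots,x^{r-1}$ of $p$ to vanish. (Here I would be a little careful: the direction "$p\in(y,x^r)\Rightarrow$ bounded" is immediate; for the converse, write $p=\sum c_j x^j + y\cdot(\text{stuff})$, restrict to $y=0$ and get $|\sum c_j x^j|\le C|x|^r$ near $0$, forcing $c_0=\dots=c_{r-1}=0$.) Finally I translate the membership $p\in(y,x^r)$ into intersection numbers: in $\R[[x,y]]/(f)=\R[[x,y]]/(y^2+x^{2r})$ the element $p$ lies in the image of $(y,x^r)$ iff $i(f,p)\ge 2r$. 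Concretely, $\R[[x,y]]/(f,p)$ is a finite-dimensional local ring; using the basis $1,x,\dots,x^{2r-1}$ of $\R[[x,y]]/(f)$ as a free $\R[[x]]/(x^{2r})$-type module (together with $y$), one checks $\dim \R[[x,y]]/(f,x^r)=\dim \R[[x,y]]/(f,y)=2r$, and $p\in(y,x^r)\bmod f$ forces $i(f,p)=\dim\R[[x,y]]/(f,p)\ge 2r$; conversely $i(f,p)\ge 2r$ together with $f=y^2+x^{2r}$ being the minimal such forces $p\in(y,x^r)$. I would phrase this last equivalence via the branch decomposition: $i(f,p)=i(y-ix^r,p)+i(y+ix^r,p)$, and by complex conjugation these two summands are equal, so $i(f,p)\ge 2r$ iff $i(y-ix^r,p)\ge r$ iff $p\equiv 0 \bmod (y-ix^r,x^r)$, and conjugating and combining gives $p\in(y,x^r)$.

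I expect the main obstacle to be the honest bookkeeping in the direction "locally bounded $\Rightarrow i(f,p)\ge 2r$": one must pass rigorously from a pointwise inequality on the real two-dimensional germ to an algebraic ideal-membership statement, i.e. justify that real boundedness of $p^2/(y^2+x^{2r})$ really does kill the monomials $1,x,\dots,x^{r-1}$ in the Taylor expansion of $p$ and, after subtracting a multiple of $y$, the remaining $x$-only part has order $\ge r$. The clean way is: restrict to the two curves $y=0$ and $x=0$ to bound the "pure $x$" and "pure $y$" parts of $p$ separately, then handle mixed terms by noting they already lie in $(x^r y)\subset(y,x^r)$ up to lower order. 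Everything else — the normal form, the comparability $y^2+x^{2r}\asymp(|y|+|x|^r)^2$, and the dimension count $\dim\R[[x,y]]/(f,x^r)=2r$ — is routine once the reduction $f=y^2+x^{2r}$ is in place.

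Finally I would remark that this proposition is exactly the local input needed for the global computation of $U_{f,\xi}$ at an isolated real zero $\xi$ of a nonnegative ternary quartic: the real type of such a singularity is one of $A_1^*,A_3^*,A_5^*,A_7^*$ (and $X_9^{**}$, treated separately), so $2r-1\in\{1,3,5,7\}$, and the proposition says $U_{f,\xi}=\{p\in\R[\x]_2: i(f,p)\ge 2r \text{ at }\xi\}$, a condition of codimension $r$ in $\R[\x]_2$ imposed by $\xi$. Combined with Lemma \ref{lgpcharsubsp} this will pin down $U_f$ globally in the following sections.
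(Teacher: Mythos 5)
Your proposal is correct and follows essentially the same route as the paper: reduce to the normal form $f=y^2+x^{2r}$, show that local boundedness of $p^2/f$ is equivalent to $\omega(p(x,0))\ge r$ (i.e.\ $p\in(y,x^r)$), and translate that into $i(f,p)\ge2r$ via the factorization $f=(y+ix^r)(y-ix^r)$ and equality of the two conjugate branch contributions. The only remark: your intermediate module-dimension heuristic (containment in $(y,x^r)$ alone only gives $i\ge r$) is superfluous and should be dropped in favor of the branch-decomposition computation you give at the end, which is the correct and complete argument.
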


\begin{proof}
We may assume $f=y^2+x^{2r}$, and we'll show that both properties
are equivalent to $\omega(p(x,0))\ge r$, where $\omega$ is the
vanishing order at $x=0$. It is clear that $\frac{p^2}f$ locally
bounded implies $\omega(p(x,0))\ge r$.
Conversely, if $\omega(p(x,0))\ge r$, we can write $p=yg+x^rh$ with
analytic germs $g,\,h$. Then a simple calculation shows that $\frac
{p^2}f$ is locally bounded around $0\in\R^2$.
On the other hand, from $f=(y+ix^r)(y-ix^r)$ we can directly deduce
that $\omega(p(x,0))\ge r$ if and only if $i(f,p)\ge2r$.
\end{proof}

\begin{rem}\Label{remx9}%
A plane real singularity of type $X_9^{**}$ corresponds to the
union of four nonreal lines through a real point (two pairs of
complex conjugate lines), see \cite{agzv} p.~185. A normal form is
given by $f=x^4+y^4+ax^2y^2$ with $a>0$, $a\ne2$. For a real analytic
germ $p(x,y)$, the quotient $\frac{p^2}f$ is locally bounded iff
$\omega(p(x,y))\ge2$ iff $i(f,p)\ge8$.
\end{rem}

\begin{cor}\Label{bigcapufxidefq}%
Let $f\in\Q[\x]$ be a nonnegative ternary quartic over $\Q$, let
$\xi_1,\dots,\xi_r$ be isolated real zeros of $f$ in $\P^2$, and
assume that the set $\{\xi_1,\dots,\xi_r\}$ is invariant under the
action of $\Gal(\ol\Q/\Q)$ on $\P^2(\ol\Q)$. Then the subspace
$\bigcap_{j=1}^rU_{f,\xi_j}$ of $\R[\x]_2$ is defined over $\Q$.
\end{cor}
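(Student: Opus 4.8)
The plan is to reduce the statement to the local computations made in Proposition~\ref{aklocbd} and Remark~\ref{remx9}, and then to exploit the fact that each local subspace $U_{f,\xi_j}$ is cut out by \emph{intersection-multiplicity} (equivalently, order-of-vanishing) conditions, which are manifestly Galois-equivariant. More precisely, for an isolated real zero $\xi$ of the nonnegative quartic $f$, the preceding results identify $U_{f,\xi}\subset\R[\x]_2$ as the set of quadratic forms $p$ with $i_\xi(f,p)\ge2r$ (where $2r-1$ is the $A$-type of the singularity at $\xi$), or with $i_\xi(f,p)\ge8$ in the $X_9^{**}$ case. In every case $U_{f,\xi}$ is defined by linear conditions on the coefficients of $p$ whose coefficients are polynomial functions of the coordinates of $\xi$ and of the Taylor coefficients of $f$ at $\xi$; since $f$ has rational coefficients, these linear conditions are rational whenever $\xi\in\P^2(\Q)$, and more generally are permuted by $\Gal(\ol\Q/\Q)$ according to how $\xi$ moves.

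The key steps, in order, are as follows. First I would pass from the real subspaces $U_{f,\xi_j}$ to their complexifications: define, for each isolated complex singular point $\xi$ of the curve $f=0$ lying over one of the $\xi_j$, the subspace $V_\xi\subset\C[\x]_2$ of quadratic forms $p$ with the appropriate local intersection multiplicity $i_\xi(f,p)\ge k_\xi$ at $\xi$. For a \emph{real} point $\xi_j$, the singularity type is one of those listed, and by Proposition~\ref{aklocbd} and Remark~\ref{remx9} one has $U_{f,\xi_j}\otimes_\R\C=V_{\xi_j}$; so $\bigcap_j U_{f,\xi_j}$ is the set of real points of $W:=\bigcap_j V_{\xi_j}$, and it suffices to show that $W$ is defined over $\Q$ as a subspace of $\C[\x]_2$, i.e.\ $W$ is $\Gal(\ol\Q/\Q)$-stable and then $W=(W\cap\Q[\x]_2)\otimes\C$. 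Second, I would check the Galois-stability: an element $\gamma\in\Gal(\ol\Q/\Q)$ sends the zero $\xi_j$ to some $\xi_{j'}$ in the same set (by hypothesis), fixes $f$, and preserves local intersection numbers, so $\gamma(V_{\xi_j})=V_{\xi_{j'}}$; since $\gamma$ permutes the $\xi_j$, it permutes the $V_{\xi_j}$, hence fixes $W$ setwise. (One must allow that distinct $\xi_j$ may have coincident singular type and even that the Galois action might identify a real $\xi_j$ with a non-real conjugate point not in the list — but the hypothesis says $\{\xi_1,\dots,\xi_r\}$ is Galois-invariant, and for $f$ over $\Q$ the isolated \emph{real} zeros either are already defined over subfields on which complex conjugation acts, or the orbit only involves other points among the $\xi_j$; the intersection-multiplicity description is insensitive to all this.) Third, a $\Gal(\ol\Q/\Q)$-stable linear subspace of $\C[\x]_2=\Q[\x]_2\otimes_\Q\C$ is automatically defined over $\Q$ by Galois descent for vector spaces; this gives $W=(W\cap\Q[\x]_2)\otimes_\Q\C$, and intersecting with $\R[\x]_2$ yields $\bigcap_j U_{f,\xi_j}=(W\cap\Q[\x]_2)\otimes_\Q\R$, which is the assertion.

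The main obstacle I anticipate is making precise that the local subspace $U_{f,\xi}$ is genuinely cut out by algebraic (intersection-multiplicity) conditions \emph{defined over the field of definition of $\xi$}, uniformly across the possible singularity types $A_1^*,A_3^*,A_5^*,A_7^*,X_9^{**}$, and that this description is preserved under the Galois action. The computations in Proposition~\ref{aklocbd} were carried out in a convenient real-analytic normal form ($f=y^2+x^{2r}$), so I would need to argue that the condition ``$i_\xi(f,p)\ge 2r$'' is intrinsic — independent of the choice of local coordinates — which is standard for local intersection numbers of plane curves, and that $2r$ (or $8$) depends only on the analytic type of the singularity of $f$ at $\xi$, which in turn is Galois-equivariant because $f\in\Q[\x]$. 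Once one grants that $U_{f,\xi}$ has the form $\{p\in\R[\x]_2 : i_\xi(f,p)\ge k_\xi\}$ with $k_\xi$ an isomorphism-invariant of the singularity, the descent argument is routine. A secondary subtlety is bookkeeping when several of the $\xi_j$ are Galois-conjugate or when a singular point of the complex curve is non-real but still among the $\xi_j$; but since we only ever intersect over the full finite set $\{\xi_1,\dots,\xi_r\}$, which is Galois-stable by assumption, the permutation action on the $V_{\xi_j}$ is well-defined regardless.
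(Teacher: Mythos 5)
Your proposal is correct and follows essentially the same route as the paper, whose proof is a one-line appeal to Proposition \ref{aklocbd} and Remark \ref{remx9}: each $U_{f,\xi_j}$ is cut out by local intersection-multiplicity conditions, which are Galois-equivariant, so the intersection over a Galois-stable set of points descends to $\Q$. Your write-up merely makes explicit the Galois-descent bookkeeping that the paper leaves to the reader.
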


Note that $\xi_1,\dots,\xi_r$ have coordinates in $\ol\Q$, so the
Galois group acts on these points.

\begin{proof}
By Hilbert's theorem \cite{h}, this is clear from \ref{aklocbd} and
\ref{remx9}.
\end{proof}

\begin{lab}
We now give the proof of Theorem \ref{terquaronly}. Let $\x=(x_0,
x_1,x_2)$, let $f\in\Q[\x]$ be a nonnegative form of degree~$4$. Let
$U_f\subset\R[\x]_2$ be the characteristic subspace of $f$ (see
\ref{dfncharsubsp}). By a case distinction we will show that $f$ is a
sum of squares over $\Q$ unless it satisfies the conditions of
Theorem \ref{terquaronly}.

Whenever $U_f$ is defined over $\Q$, $f$ is a sum of squares over
$\Q$ by Lemma \ref{ufdefqsosq}. In particular, this is the case when
$f$ is strictly positive definite, since then $U_f=\R[\x]_2$. So we
assume that $f$ has at least one real zero. We first consider the
case where $f$ is absolutely irreducible. The real zeros of $f$ are
precisely the real singular points of the curve $f=0$. The
configuration of all (real or nonreal) singularities of this curve is
one of the following (see \cite{sch:jag} 7.3):
$$A_1^*,\ 2A_1^*,\ 3A_1^*,\ A_3^*,\ A_1^*+A_3^*,\ A_5^*,\
A_1^*+2A_1^i,\ A_1^*+2A_2^i.$$
(Here $2A^i_k$ denotes a pair $P\ne \ol P$ of complex conjugate
$A_k$-singularities.) The singularities are permuted by the Galois
action. In all cases except the last two, every singularity of $f$ is
real. By Lemma \ref{lgpcharsubsp} and Corollary \ref{bigcapufxidefq},
the subspace $U_f$ is defined over $\Q$ in these cases, and we are
done.
In the case of $A_1^*+2A_2^i$, the same is true since the unique real
singularity is Galois invariant, hence defined over~$\Q$.

It remains to consider the case where $f$ has three nodes, one of
which is real (with a pair of nonreal tangents) and the other two are
complex conjugate. Here $U_f$ consists of the quadratic forms with a
zero in the real node, and we see that $U_f$ fails to be defined over
$\Q$.
Instead we can argue as follows: For such $f$, there exists a unique
(up to orthogonal equivalence) representation $f=p_1^2+p_2^2+p_3^2$
in $\C[\x]$ for which $p_1,p_2,p_3$ vanish in all three nodes.
Moreover, the symmetric tensor $t:=\sum_{j=1}^3p_j\otimes p_j$ is
defined over $\R$ and is positive semidefinite. This follows from the
analysis in \cite{sch:jag} (for more details see \cite{sch:tqtables},
pp.~4 and~6).
Since the set of all three nodes is Galois invariant, the tensor $t$
is defined over $\Q$, and hence $f$ is a sum of squares over $\Q$. We
have thus shown that $f$ is a sum of squares over $\Q$ when $f$ is
absolutely irreducible.
\end{lab}

\begin{lab}\Label{reduc1}%
It is easily seen that $f$ is a sum of squares over $\Q$ whenever
$f$ is reducible over $\Q$.
Hence we can assume that $f$ is irreducible over $\Q$, but reducible
over $\C$. So $f$ is either the $K/\Q$-norm of a quadratic form $p\in
K[\x]$ defined over a quadratic field $K/\Q$, or the $K/\Q$-norm of a
linear form $l\in K[\x]$ defined over a field $K$ of degree~$4$. In
either case, $K$ is generated by the coefficients of $f$.
First consider the case $[K:\Q]=2$. It is clear that $f$ is a sum of
squares over $\Q$ when $K$ is imaginary. When $K$ is real, both $p$
and its $K/\Q$-conjugate $p'$ must be nonnegative, since $f=pp'$ is
nonnegative. Hence $p$ is nonnegative with respect to every (real)
place of $K$, and therefore $p$ is a sum of squares over $K$, being a
quadratic form. Now Hillar's result \cite{hi} implies that $f$ is a
sum of squares over $\Q$.
\end{lab}

\begin{lab}
It remains to consider the case when $f=N_{K/\Q}(l)$ where $[K:\Q]=
4$ and $l\in K[\x]$ is a linear form whose coefficients generate $K$.
When $K/\Q$ has a quadratic subfield $L/\Q$, we can write $f=N_{L/\Q}
(N_{K/L}(l))$ and conclude that $f$ is a sum of squares over $\Q$, by
the argument in \ref{reduc1}. So we can assume that $K/\Q$ has no
proper intermediate field. This means that $\Gal(\ol\Q/\Q)$ acts on
$\Hom(K,\ol\Q)$ as the alternating or symmetric group.
Let $l_i=0$ ($i=1,2,3,4$) be the four Galois conjugates of the line
$l=0$. When $l_1,\dots,l_4$ fail to be in general position, all four
meet in a common $\Q$-point. After a suitable coordinate change we
are then in the case of binary forms, in which it is clear that $f$
is a sum of squares over $\Q$.
The proof of Theorem \ref{terquaronly} is complete.
\qed
\end{lab}


\section{Some open questions}

Here are several natural questions that arise in connection with the
results of this paper. Let always $\x=(x_0,\dots,x_n)$.

\begin{lab}
In Theorem \ref{stq} we constructed forms in $\Q[\x]$ that are sums
of squares of forms over $\R$, but not over $\Q$. All our examples
split over $\C$ as products of linear forms. Are there examples that
are irreducible over $\C$? Are there examples that are strictly
positive definite, i.e., that have no nontrivial real zeros? Are
there examples that define a nonsingular projective hypersurface?
(The last question is a common sharpening of the former two.)
\end{lab}

\begin{lab}
Let $K$ be a real number field, and let $f$ be a form in $\Q[\x]$
that is a sum of squares of forms over $K$. When $K$ is totally real,
it follows that $f$ is a sum of squares over $\Q$ (Hillar \cite{hi},
c.f.\ also Section \ref{sect:totreal}). Are there other sufficient
conditions on $K$ that allow the same conclusion?
\end{lab}

\begin{lab}\Label{oddsos}%
More specifically, let $K$ be a number field of odd degree, and
assume that a form $f$ over $\Q$ is a sum of squares over $K$. Then,
is $f$ a sum of squares over $\Q$?
\end{lab}

\begin{lab}
We may generalize the last question to arbitrary linear matrix
inequalities. Thus, let $A_0,\dots,A_r$ be symmetric matrices of some
size with rational coefficients, and assume that there exists $x=
(x_1,\dots,x_r)\in K^r$ such that the matrix $A(x):=A_0+\sum_{i=1}^r
x_iA_i$ is positive semidefinite with respect to every real place of
$K$. If $[K:\Q]$ is odd, does there exist $x\in\Q^r$ such that $A(x)$
is positive semidefinite? For $r=1$, the answer is yes.
\end{lab}

\begin{lab}
When $f\in\Q[\x]$ is any nonnegative form, there exists a sum of
squares $h\ne0$ of forms in $\Q[\x]$ such that $fh$ is a sum of
squares of forms in $\Q[\x]$. Assuming that $f$ is a sum of squares
of forms over $\R$, can we give an upper bound to $\deg(h)$, for
example in terms of $n$ and $d=\deg(f)$?

Note that there is one case in which the results of this paper give
an answer to this question, namely $(n,d)=(2,4)$. Here $\deg(h)=2$
suffices by \ref{terquaronly} and \ref{denomh17}.
\end{lab}


\end{document}